\documentclass[a4paper,11pt]{amsart}
\usepackage[
  top=3cm,
  bottom=3cm,
  left=3cm,
  right=3cm,
  headheight=14pt,
  headsep=1cm,
  footskip=1cm
]{geometry}
\usepackage{amsmath, epsfig, verbatim}
\usepackage{amsfonts}
\usepackage{amsthm}
\usepackage{mathtools}
\usepackage{amsmath}
\usepackage{amssymb} 
\usepackage{graphics,graphicx}
\usepackage{epstopdf}
\usepackage{listings}
\usepackage{xcolor}
\usepackage{tikz}
\usepackage{subcaption}
\usepackage{pgfplots}
\usepackage{caption}
\usepackage{booktabs}
\usepackage{tabularx}
\usepackage{hyperref}   
\usepackage{doi}
\usepackage{enumitem}
\usepackage[numbers,sort&compress]{natbib}
\usetikzlibrary{calc}
\allowdisplaybreaks
\definecolor{codegray}{rgb}{0.95,0.95,0.95}
\definecolor{pykeyword}{rgb}{0.13,0.13,1}
\definecolor{pystring}{rgb}{0.58,0,0.82}

\lstdefinestyle{pythonstyle}{
    backgroundcolor=\color{codegray},
    language=Python,
    basicstyle=\ttfamily\small,
    keywordstyle=\color{pykeyword}\bfseries,
    stringstyle=\color{pystring},
    commentstyle=\color{gray},
    showstringspaces=false,
    numbers=left,
    numberstyle=\tiny,
    frame=single,
    breaklines=true,
    tabsize=4,
}

\numberwithin{equation}{section}
\theoremstyle{plain}
\newtheorem{theorem}{Theorem}
\newtheorem{defn}[theorem]{Definition}
\newtheorem{coro}[theorem]{Corollary}
\newtheorem{lemma}[theorem]{Lemma}
\newtheorem{prop}[theorem]{Proposition}

\begin{document}
\title{Connected Mutual-Visibility in Graphs} 
\author{Tonny K B}
\address{Tonny K B, Department of Mathematics, College of Engineering Trivandrum, Thiruvananthapuram, Kerala, India, 695016.}
\email{tonnykbd@cet.ac.in}
\author{Shikhi M}
\address{Shikhi M, Department of Mathematics, College of Engineering Trivandrum, Thiruvananthapuram, Kerala, India, 695016.}
\email{shikhim@cet.ac.in}

\begin{abstract}
A set $S$ of vertices of a graph $G$ is a connected mutual-visibility
set if every two vertices of $S$ are joined by a shortest path whose
internal vertices lie outside $S$, and the subgraph induced by $S$ is
connected. We introduce the connected mutual-visibility number
$\mu_c(G)$, defined as the maximum cardinality of such a set, and
investigate its structural and algorithmic properties. We establish
fundamental bounds, derive Nordhaus--Gaddum type inequalities, and
characterise the graphs attaining the minimum and maximum possible
values. For regular $(d,2,-\delta)$-graphs, we derive general
bounds on $\mu_c(G)$ and determine its exact value for the two cubic
graphs of defect $2$. We further show that $\mu_c(G)$ is determined locally
by the block structure of $G$, namely, it is equal to the maximum of
the corresponding values over the blocks of $G$. Finally, we present a
polynomial-time algorithm for recognising connected mutual-visibility
sets and prove that the associated decision problem is
$\mathsf{NP}$-complete, even for connected bipartite graphs of diameter
at most $4$.
\end{abstract}
\subjclass[2020]{05C12, 05C69, 05C85}
\keywords{Mutual-visibility, Connected mutual-visibility, Computational complexity}
\maketitle

\section{Introduction}

Let $S\subseteq V(G)$. Two vertices $u,v\in S$ are said to be
\emph{$S$-visible} \cite{MV_3} if there exists a $(u,v)$-geodesic whose internal
vertices do not belong to $S$. The set $S$ is called a
\emph{mutual-visibility set} of $G$ if every two distinct vertices of
$S$ are $S$-visible. The maximum cardinality of a mutual-visibility
set \cite{Stefano} of $G$ is called the \emph{mutual-visibility number} of $G$ and is
denoted by $\mu(G)$. A mutual-visibility set of cardinality $\mu(G)$
is called a \emph{$\mu$-set} of $G$. The concept of mutual visibility has attracted considerable attention
in recent years, leading to several variants and extensions as well as
investigations of its structural, algorithmic, and extremal
properties~\cite{DetMag2025, MV_3}.

Mutual visibility has applications in multi-agent robotic systems,
where visibility among robots is essential for coordinated operation,
formation control, surveillance, and exploration of unknown or dynamic
environments
\cite{robotics1,robotics2,robotics3,robotics4,robotics5,robotics6,robotics7}.
It also admits a natural interpretation in communication networks,
where the selected vertices represent cooperating agents and
geodesics whose internal vertices lie outside the selected set
correspond to efficient communication routes.  In this setting, the selected vertices represent
cooperating agents, while geodesics whose internal vertices lie
outside the selected set model efficient communication through the
surrounding network.

In many practical situations, however, the selected agents are also
required to form a connected communication structure. For example,
they may represent a trusted team whose members exchange confidential
or mission-critical information internally while still interacting
efficiently with one another through the ambient network. The internal
communication route need not be shortest, since all intermediate
vertices belong to the trusted team. On the other hand, communication
through vertices outside the team may involve unreliable, costly, or
otherwise unfavourable conditions, making shortest routes preferable.
These considerations naturally motivate imposing connectivity on the
selected mutual-visibility set.

Motivated by these considerations, we introduce the notion of
connected mutual visibility. A connected mutual-visibility set
is a mutual-visibility set whose induced subgraph is connected. Thus, the parameter combines two
fundamental requirements: the existence of shortest external routes
between every pair of selected vertices and the existence of an
internal connected communication network among them.

The study of connected mutual visibility reveals several phenomena
that differ significantly from those of the classical parameter. In
particular, connectivity imposes additional structural restrictions on
maximum mutual-visibility sets, leading to new extremal
characterisations, exact values for several graph classes and graph
operations, and new complexity questions.

In this paper, we establish fundamental properties of connected
mutual visibility. We derive basic bounds and characterise graphs
attaining the extremal values of $\mu_c(G)$. We determine the
connected mutual-visibility number for several important graph
classes, including geodetic graphs, joins,
block graphs, and cactus graphs. We also establish
Nordhaus--Gaddum-type inequalities, investigate regular defect
graphs of diameter $2$, and prove that recognising connected
mutual-visibility sets is polynomial-time solvable, whereas the
associated optimisation decision problem is
$\mathsf{NP}$-complete even for bipartite graphs of diameter at most
$4$.

\section{Notations and preliminaries}
Let $G=(V,E)$ be a finite, simple, undirected graph. We denote its vertex
set and edge set by $V(G)$ and $E(G)$, respectively. Unless stated otherwise,
all graphs considered in this paper are connected. A graph is \emph{geodetic} if every pair of vertices is
joined by a unique shortest path. A sequence of distinct vertices $(u_0,u_1,\ldots,u_n)$ is called a
$(u_0,u_n)$\emph{-path} in a graph $G$ if
$u_iu_{i+1}\in E(G)$ for every $i\in{0,1,\ldots,n-1}$. A
\emph{cycle} is obtained from a $(u_0,u_n)$-path by adding the edge
$u_0u_n$. The cycle on $n$ vertices is denoted by $C_n$. A graph $G$
is \emph{triangle-free} if it contains no cycle of length $3$. The \emph{complement} of a graph $G$, denoted by $\overline{G}$, is the
graph with vertex set $V(G)$ in which two distinct vertices are
adjacent if and only if they are non-adjacent in $G$. For
$S\subseteq V(G)$, the \emph{subgraph of $G$ induced by $S$}, denoted
by $G[S]$, is the graph with vertex set $S$ in which two distinct
vertices are adjacent if and only if they are adjacent in $G$. A subgraph $H$ of a graph $G$ is called \emph{convex} if every shortest
path in $G$ between two vertices of $H$ is contained in $H$.

Let $X$ be a set, and let $|X|$ denote its cardinality. For a vertex
$v\in V(G)$, let $N_G(v)$ denote the set of vertices adjacent to $v$ in $G$,
and let $d_G(v)=|N_G(v)|$ denote the degree of $v$ in $G$. A graph $G$
is \emph{$d$-regular} if every vertex of $G$ has degree $d$. A \emph{cubic graph} is a $3$-regular graph. The \emph{girth} of a graph $G$, denoted by $g(G)$, is the length of a
shortest cycle in $G$. If $G$ is acyclic, then $g(G)=\infty$. A subset $C\subseteq V(G)$ is a \emph{clique} of $G$ if every two
distinct vertices of $C$ are adjacent. The maximum cardinality of a
clique in $G$ is called the \emph{clique number} of $G$ and is denoted
by $\omega(G)$. A \emph{block} of a graph $G$ is a maximal connected subgraph of $G$
having no cut-vertex. A graph $G$ is called a \emph{block graph} if
every block of $G$ is complete. A graph $G$ is called a \emph{cactus graph} if any two cycles of $G$
have at most one vertex in common. For two disjoint sets $S,T\subseteq V(G)$, let $e(S,T)$ denote the
number of edges of $G$ with one endpoint in $S$ and the other in $T$.

For integers $d\ge2$ and $k\ge1$, the \emph{Moore bound} is

$$
M(d,k)=1+d\sum_{i=0}^{k-1}(d-1)^i.
$$

A \emph{$(d,k,-\delta)$-graph} is a $d$-regular graph of diameter $k$
and order $M(d,k)-\delta$, where $\delta\ge0$ is called the
\emph{defect} of the graph. In particular,
$M(d,2)=d^2+1$, and therefore a $(d,2,-\delta)$-graph has order
$d^2+1-\delta$. Graphs of defect $0$ attain the Moore bound and are called
\emph{Moore graphs}. For $k=2$ and $d\ge2$, Moore graphs are known to
exist only for $d=2,3,7$, and possibly $57$
\cite{hoffman_moore}. The corresponding graphs are $C_5$, the
Petersen graph, and the Hoffman--Singleton graph, while the existence
of a Moore graph of degree $57$ remains an open problem. For
$k\ge3$ and $d=2$, Moore graphs are precisely the cycles of length
$2k+1$. Moreover, for $k\ge3$ and $d\ge3$, Moore graphs do not
exist~\cite{BanIto1973,Dam1973}. A comprehensive account of Moore
graphs is given in~\cite{MillerSiran2013}.

Let $G$ and $H$ be graphs. The \emph{corona product} of $G$ and $H$,
denoted by $G\odot H$, is the graph obtained from one copy of $G$ and,
for each vertex $v\in V(G)$, a copy $H_v$ of $H$ by joining $v$ to
every vertex of $H_v$. If $G$ and $H$ are vertex-disjoint, then the
\emph{join} of $G$ and $H$, denoted by $G\vee H$, is the graph
obtained from their disjoint union by joining every vertex of $G$ to
every vertex of $H$.

\section{Connected mutual-visibility}
\begin{defn}
Let $G$ be a graph and let $S\subseteq V(G)$. The set $S$ is called a
\emph{connected mutual-visibility set} of $G$ if $S$ is a mutual-visibility
set of $G$ and the induced subgraph $G[S]$ is connected. The maximum
cardinality of a connected mutual-visibility set of $G$ is called the
\emph{connected mutual-visibility number} of $G$ and is denoted by
$\mu_c(G)$. A connected mutual-visibility set of cardinality $\mu_c(G)$ is
called a $\mu_c$-set of $G$.
\end{defn}
The vertex set of every clique in a graph is a connected
mutual-visibility set. Consequently,
for every graph $G$, $2 \leq \omega(G)\leq \mu_c(G)\leq \mu(G)$, where $\omega(G)$ denotes the clique number of $G$.

\begin{prop}\label{P16.prop1}
Let $G$ be a graph of order $n\geq 2$. Then $\mu_c(G)=n$ if and only if
$G\cong K_n$.
\end{prop}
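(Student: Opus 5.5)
The plan is to prove the two directions separately; both are short, and the only real content is that in a non-complete connected graph, some pair of vertices cannot see each other once every vertex is chosen.

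For the direction ``$G\cong K_n$ implies $\mu_c(G)=n$'', I use the remark preceding the statement. The vertex set of every clique is a connected mutual-visibility set. Since $V(K_n)$ is itself a clique, we get $\mu_c(K_n)\ge n$. The trivial bound $\mu_c(G)\le |V(G)|=n$ then gives equality.

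For the converse, assume $\mu_c(G)=n$. Then $S=V(G)$ is a connected mutual-visibility set, and in particular a mutual-visibility set. Suppose, for contradiction, that $G$ is not complete, and pick two non-adjacent vertices $u,v$. Since $G$ is connected by the standing convention, $d_G(u,v)\ge 2$, so every $(u,v)$-geodesic has at least one internal vertex. That internal vertex lies in $V(G)=S$. Hence $u$ and $v$ are not $S$-visible, contradicting the assumption that $S$ is a mutual-visibility set. Therefore every two distinct vertices are adjacent, and $G\cong K_n$.

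The main point needing care is the use of connectedness. It guarantees that a $(u,v)$-geodesic exists and has length at least $2$ when $u,v$ are non-adjacent. If disconnected graphs were allowed, the vertices $u$ and $v$ would simply have no geodesic at all, so they would still fail to be $S$-visible and the argument goes through either way. The hypothesis $n\ge 2$ only ensures that the statement is consistent with the convention $\mu_c(G)\ge 2$. No other step presents an obstacle.
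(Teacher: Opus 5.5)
Your proof is correct, and the forward direction is the same as the paper's. For the converse the routes differ slightly. The paper does not argue directly. It uses $\mu_c(G)\le\mu(G)\le n$ to get $\mu(G)=n$, and then cites Lemma~4.1(3) of Di Stefano~\cite{Stefano}, which states that $\mu(G)=n$ forces $G\cong K_n$.

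Your argument is precisely the content of that cited lemma: for a non-adjacent pair $u,v$, every $(u,v)$-geodesic has an internal vertex, and that vertex lies in $S=V(G)$. Both routes use only the mutual-visibility part of the definition and never need connectivity of $G[S]$. Yours makes the proof self-contained at the cost of one extra line, while the paper's outsources that observation to the literature.
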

\begin{proof}
If $G\cong K_n$, then $V(G)$ is a connected mutual-visibility set, and hence $\mu_c(G)=n$.

Conversely, suppose that $\mu_c(G)=n$. Since every connected
mutual-visibility set is a mutual-visibility set, we have
$\mu_c(G)\leq \mu(G)$. Hence $
n=\mu_c(G)\leq \mu(G)\leq n$, 
and so $\mu(G)=n$. By Lemma~4.1(3) of~\cite{Stefano}, it follows that
$G\cong K_n$.
\end{proof}
\begin{theorem}\label{P16.th3}
Let $G$ be a graph of order at least $2$. Then
$\mu_c(G)=2$ if and only if $g(G)\geq 5$.
\end{theorem}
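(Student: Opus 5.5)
We prove the two directions separately, working throughout with connected graphs $G$ of order at least $2$ (the paper's standing convention). The case $g(G)=\infty$, i.e.\ $G$ a tree, is included in the condition $g(G)\ge 5$.

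\emph{Sufficiency.} Suppose $g(G)\ge 5$ and let $S$ be a connected mutual-visibility set with $|S|\ge 3$. Since $G[S]$ is connected and has at least three vertices, it is not complete: a triangle would contradict $g(G)\ge 5$. Hence $G[S]$ contains an induced path $u\,w\,v$ with $u,v,w\in S$ and $uv\notin E(G)$, so $d_G(u,v)=2$. Any $(u,v)$-geodesic has the form $u\,x\,v$ with $x$ a common neighbour of $u$ and $v$. The vertex $w$ is one such common neighbour. If there were another, say $x\ne w$, then $u\,w\,v\,x$ would be a $4$-cycle, contradicting $g(G)\ge 5$. So $u\,w\,v$ is the unique $(u,v)$-geodesic, and its internal vertex $w$ lies in $S$. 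Thus $u$ and $v$ are not $S$-visible, a contradiction. Therefore $\mu_c(G)\le 2$, and combined with the earlier observation $\mu_c(G)\ge\omega(G)\ge 2$ we get $\mu_c(G)=2$.

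\emph{Necessity.} We prove the contrapositive: if $g(G)\le 4$, we exhibit a connected mutual-visibility set of size $3$.
\begin{itemize}
\item If $g(G)=3$, a triangle gives a clique of size $3$, so $\mu_c(G)\ge\omega(G)\ge 3$.
\item If $g(G)=4$, take a $4$-cycle $a\,b\,c\,d$. Since $G$ has no triangles, the cycle is induced, so $ac\notin E(G)$ and $d_G(a,c)=2$. Let $S=\{a,b,c\}$. Then $G[S]$ is the path $a\,b\,c$, hence connected. The pairs $a,b$ and $b,c$ are adjacent, so they are trivially $S$-visible. The pair $a,c$ is joined by the geodesic $a\,d\,c$, whose only internal vertex $d$ is not in $S$. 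So $S$ is a connected mutual-visibility set and $\mu_c(G)\ge 3$.
\end{itemize}

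The only step needing care is the uniqueness of the common neighbour in the sufficiency direction. It rests on the fact that two distinct common neighbours of $u$ and $v$ would form a $4$-cycle, so this is exactly where the hypothesis $g(G)\ge 5$, rather than mere triangle-freeness, is used.
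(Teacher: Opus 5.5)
Your proof is correct and follows essentially the same route as the paper. A triangle, or three consecutive vertices of a $4$-cycle, gives a connected mutual-visibility set of size $3$; conversely, when $g(G)\ge 5$, an induced path $u\,w\,v$ in $G[S]$ has $w$ as the unique common neighbour of $u$ and $v$, which blocks their $S$-visibility.
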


\begin{proof}
Suppose that $\mu_c(G)=2$. If $G$ contains a triangle, then the
vertex set of this triangle is a connected mutual-visibility set of
cardinality $3$, a contradiction. Hence $G$ is triangle-free. If
$(v_1,v_2,v_3,v_4,v_1)$ is a $4$-cycle, then $v_1$ and $v_3$ are
non-adjacent. The set $S=\{v_1,v_2,v_3\}$ induces a connected subgraph,
and $(v_1,v_4,v_3)$ is a shortest $(v_1,v_3)$-path whose internal vertex
lies outside $S$. Thus $S$ is a connected mutual-visibility set of
cardinality $3$, again a contradiction. Therefore, $G$ contains neither
a triangle nor a $4$-cycle, and so $g(G)\geq 5$.

Conversely, suppose that $g(G)\geq 5$, and let $S\subseteq V(G)$ induce
a connected subgraph with $|S|\geq 3$. Since $G$ is triangle-free,
$G[S]$ contains an induced path $(u,v,w)$. If $u$ and $w$ had a common
neighbour different from $v$, then $G$ would contain a $4$-cycle. Hence
$v$ is their unique common neighbour, and so $(u,v,w)$ is the unique
shortest $(u,w)$-path. Since its internal vertex $v$ belongs to $S$, the
vertices $u$ and $w$ are not $S$-visible. Thus no connected
mutual-visibility set has cardinality at least $3$. Since $G$ is
connected and has order at least $2$, we have $2\leq \mu_c(G)$.
Therefore, $\mu_c(G)=2$.
\end{proof}
Let $P$ and $H$ denote the Petersen graph and the Hoffman--Singleton graph,
respectively. Both graphs have girth $5$. Hence
Theorem~\ref{P16.th3} gives $\mu_c(P)=\mu_c(H)=2$. This is in clear
contrast with their classical mutual-visibility numbers, namely
$\mu(P)=6$~\cite{GP_1} and $\mu(H)=20$~\cite{TonShi_dif2026}.
\begin{theorem}\label{P16.th9}
Let $G$ be a graph of order at least $2$, and let $S$ be a connected mutual-visibility set
with $|S|\ge2$. If $T$ is a spanning tree of $G[S]$, then
\[
|S|-2
\le
\sum_{x\in S}\binom{d_T(x)}{2}
\le
c_3(G)+c_4(G),
\]
where $c_i(G)$ denotes the number of cycles of length $i$ in $G$.
Consequently,
\[
\mu_c(G)\le c_3(G)+c_4(G)+2.
\]
\end{theorem}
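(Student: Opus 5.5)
The plan has two independent parts: a purely tree-theoretic lower bound for the sum, and an injection from "cherries" of $T$ into short cycles of $G$ for the upper bound.

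\textbf{Lower bound.} Let $m=|S|$, so $T$ has $m$ vertices and $m-1$ edges, and $\sum_{x\in S} d_T(x)=2(m-1)$. For every integer $d\ge 1$ we have $\binom{d}{2}\ge d-1$, because $\binom{d}{2}-(d-1)=\binom{d-1}{2}\ge 0$. Since $m\ge 2$, every vertex of $T$ has degree at least $1$. Summing over $S$ gives
\[
\sum_{x\in S}\binom{d_T(x)}{2}\ge \sum_{x\in S}(d_T(x)-1)=2(m-1)-m=m-2.
\]

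\textbf{Upper bound.} The sum $\sum_x\binom{d_T(x)}{2}$ counts the pairs $(x,\{u,w\})$ with $u,w$ distinct $T$-neighbours of $x$, i.e.\ the paths $u\,x\,w$ of length $2$ in $T$ (cherries). I will map each cherry injectively to a $3$- or $4$-cycle of $G$.

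1. If $uw\in E(G)$, assign the triangle $\{u,x,w\}$. A triangle contains exactly three length-$2$ paths, each determined by its middle vertex. Two different cherries mapping to the same triangle would need two of its edges at each of two middle vertices, hence all three edges in $T$, which is impossible since $T$ is acyclic. So each triangle receives at most one cherry.
2. If $uw\notin E(G)$, then $d_G(u,w)=2$. Since $u,w\in S$ are $S$-visible, there is a common neighbour $y\notin S$. Then $x\ne y$ because $x\in S$, and $u\,x\,w\,y\,u$ is a $4$-cycle. Assign to the cherry any such cycle.
3. Injectivity on $4$-cycles: the cycle $C=u\,x\,w\,y$ contains the two $T$-edges $ux$ and $xw$ and the vertex $y\notin S$. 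The two edges of $C$ at $y$ are not in $T$, since $T$ only uses vertices of $S$. So $C$ has exactly two $T$-edges, both incident to $x$, and these determine the cherry. Triangles and $4$-cycles are counted separately, so the total map is injective and
\[
\sum_{x\in S}\binom{d_T(x)}{2}\le c_3(G)+c_4(G).
\]

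Applying both bounds to a $\mu_c$-set gives $\mu_c(G)\le c_3(G)+c_4(G)+2$. If $\mu_c(G)<2$ there is nothing to prove, and in fact $\mu_c\ge 2$ by the remarks preceding Theorem~\ref{P16.th3}.

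The main obstacle is the injectivity argument. In the $4$-cycle case it rests on $y$ lying outside $S$ and so contributing no tree edges; without this, a $4$-cycle could contain a different pair of $T$-edges and be claimed by two cherries.
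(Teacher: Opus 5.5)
Your proof is correct and follows the paper's argument essentially step for step: the same $\binom{d}{2}\ge d-1$ summation for the lower bound, and the same injection of the $2$-paths of $T$ into triangles and $4$-cycles of $G$, with the same triangle-injectivity argument. The only cosmetic difference is in the $4$-cycle case. You recover the cherry as the set of $T$-edges of the cycle, whereas the paper recovers it by noting that $y$ is the unique vertex outside $S$ and that $x$ is the vertex opposite $y$; both rest on the same fact that $y\notin S$.
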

\begin{proof}
Since $T$ is a tree of order at least $2$, every vertex of $T$ has
positive degree. Hence
\[
\binom{d_T(x)}{2}\ge d_T(x)-1
\quad\text{for every }x\in S.
\]
Summing over all vertices of $T$ and using the
handshaking lemma gives
\[
\sum_{x\in S}\binom{d_T(x)}{2}
\ge
\sum_{x\in S}(d_T(x)-1)
=
2|E(T)|-|S|
=
|S|-2.
\]

For every non-leaf vertex $x$ of $T$ and every unordered pair
$\{u,v\}$ of distinct neighbours of $x$ in $T$, consider the
$2$-path $(u,x,v)$. The number of such $2$-paths is $\sum_{x\in S}\binom{d_T(x)}{2}$,
since leaves contribute zero.

If $u$ and $v$ are adjacent in $G$, then $\{u,x,v\}$ induces a
triangle of $G$. Suppose therefore that $u$ and $v$ are non-adjacent.
Since $ux,vx\in E(G)$, we have $d_G(u,v)=2$. Since $S$ is a mutual-visibility set, there exists a $(u,v)$-geodesic
of length $2$ whose unique internal vertex lies outside $S$. Hence there is a vertex
$y\in V(G)\setminus S$ adjacent to both $u$ and $v$, and
$(u,x,v,y,u)$ is a $4$-cycle of $G$.

We claim that distinct $2$-paths of $T$ produce distinct triangles or
$4$-cycles in $G$.

Assume that two distinct $2$-paths of $T$ produce the same triangle.
The two $2$-paths cannot have the same middle vertex, for otherwise
they would have the same end vertices and hence be identical.
Thus their middle vertices are distinct. Each path contributes the two edges of the triangle incident with its
middle vertex. Consequently, their union is the entire triangle,
contradicting the fact that $T$ is a tree.

Now suppose that two distinct $2$-paths,
$(u,x,v)$ and $(u',x',v')$, produce the same $4$-cycle. Let $y$ and
$y'$ denote the corresponding vertices outside $S$. Since each
constructed $4$-cycle contains exactly one vertex outside $S$, we have
$y=y'$. In this $4$-cycle, $x$ is the unique vertex opposite to $y$.
Hence $x=x'$. Consequently,
$\{u,v\}=\{u',v'\}$, and the two $2$-paths coincide, a contradiction.
Therefore, distinct $2$-paths of $T$ produce distinct triangles or
$4$-cycles in $G$.

It follows that
\[
\sum_{x\in S}\binom{d_T(x)}{2}
\le
c_3(G)+c_4(G).
\]

If $S$ is a $\mu_c$-set of $G$, then $|S|=\mu_c(G)$, and the lower
bound established above yields
\[
\mu_c(G)\le c_3(G)+c_4(G)+2.
\]
This completes the proof.
\end{proof}
\begin{theorem}\label{P16.th1}
If $G$ is a geodetic graph, then $\mu_c(G)=\omega(G)$.
\end{theorem}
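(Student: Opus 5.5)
The lower bound $\omega(G)\le\mu_c(G)$ already holds for every graph, since the vertex set of any clique is a connected mutual-visibility set. So the whole task is the reverse inequality, and I would prove it by showing that in a geodetic graph every connected mutual-visibility set $S$ induces a complete subgraph. Then $|S|\le\omega(G)$, and applying this to a $\mu_c$-set finishes the proof.

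Suppose, for a contradiction, that $S$ is a connected mutual-visibility set and $G[S]$ is not complete. A connected graph that is not complete contains an induced path on three vertices. To see this, take two non-adjacent vertices of $S$ and a shortest path between them inside $G[S]$; any three consecutive vertices of that path form an induced $P_3$. This gives $u,v,w\in S$ with $uv,vw\in E(G)$ and $uw\notin E(G)$. Then $d_G(u,w)=2$, so $(u,v,w)$ is a $(u,w)$-geodesic in $G$.

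Since $G$ is geodetic, $(u,v,w)$ is the unique $(u,w)$-geodesic. Its only internal vertex $v$ lies in $S$, so $u$ and $w$ are not $S$-visible. This contradicts the assumption that $S$ is a mutual-visibility set. Hence $G[S]$ is complete and $|S|\le\omega(G)$, which gives $\mu_c(G)\le\omega(G)$.

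The argument mirrors the converse half of the proof of Theorem~\ref{P16.th3}, with the girth condition replaced by geodeticity. The only point that needs care is the existence of the induced $P_3$ in a connected non-complete graph, which the shortest-path argument in $G[S]$ settles.
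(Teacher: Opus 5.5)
Your proof is correct and follows essentially the same route as the paper's: both take a shortest path in $G[S]$ between two non-adjacent vertices of $S$, note that three consecutive vertices $(u,v,w)$ of it satisfy $uw\notin E(G)$ and hence $d_G(u,w)=2$, and then use the uniqueness of the $(u,w)$-geodesic, whose internal vertex $v$ lies in $S$, to contradict $S$-visibility. The only cosmetic difference is that the paper uses the first three vertices of that path, whereas you allow any three consecutive ones.
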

\begin{proof}
Since the vertex set of every clique is a connected mutual-visibility set,
we have $\omega(G)\leq \mu_c(G)$.

Let $S$ be a connected mutual-visibility set of $G$. We claim that
$G[S]$ is complete. Suppose not. Then there exist non-adjacent vertices
$x,y\in S$. Since $G[S]$ is connected, let
$P=(x=x_0,x_1,\ldots,x_\ell=y)$ be a shortest $(x,y)$-path in $G[S]$.
As $x$ and $y$ are non-adjacent, $\ell\geq 2$. Moreover,
$x_0x_2\notin E(G)$, since otherwise $(x_0,x_2,\ldots,x_\ell)$ would
be a shorter $(x,y)$-path in $G[S]$. Thus $(x_0,x_1,x_2)$ is a path of length $2$ in $G$, and $d_G(x_0,x_2)=2$. Since $G$ is geodetic,
$(x_0,x_1,x_2)$ is the unique shortest $(x_0,x_2)$-path in $G$.
However, its internal vertex $x_1$ belongs to $S$. Therefore, $x_0$
and $x_2$ are not $S$-visible, contradicting the fact that $S$ is a
mutual-visibility set. It follows that $G[S]$ is complete, and hence
$|S|\leq \omega(G)$.

Since this holds for every connected mutual-visibility set $S$, we obtain
$\mu_c(G)\leq \omega(G)$. Combining the two inequalities gives
$\mu_c(G)=\omega(G)$.
\end{proof}
Since every tree is geodetic and every tree of order at least $2$ has clique
number $2$, Theorem~\ref{P16.th1} gives the following result.
\begin{coro}\label{P16.cor1}
If $T$ is a tree of order at least $2$, then $\mu_c(T)=2$. Consequently, $\mu_c(T)=\mu(T)$ if and only if $T$ is a path.
\end{coro}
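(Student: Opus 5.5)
The first assertion, $\mu_c(T)=2$, follows directly from Theorem~\ref{P16.th1}. Every tree is geodetic, since two vertices of a tree are joined by a unique path. A tree of order at least $2$ has at least one edge and contains no triangle, so $\omega(T)=2$. Hence $\mu_c(T)=\omega(T)=2$. Equivalently, one could invoke Theorem~\ref{P16.th3}, because a tree is acyclic and so $g(T)=\infty\ge 5$.

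For the equivalence it therefore suffices to show that $\mu(T)=2$ if and only if $T$ is a path. I will use the standard fact, which I verify directly, that the set of leaves of a tree is a mutual-visibility set. Let $u,v$ be two leaves. The unique $(u,v)$-path is the geodesic, and every internal vertex of it has degree at least $2$, so it is not a leaf. Thus every pair of leaves is visible.

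\emph{If $T$ is not a path.} Since $T$ has order at least $2$ and is not a path, it has a vertex of degree at least $3$, and hence at least three leaves. The leaf set is then a mutual-visibility set of size at least $3$, so $\mu(T)\ge 3>2=\mu_c(T)$.

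\emph{If $T=P_n$.} For $n=2$ we have $\mu(P_2)=2$ trivially. For $n\ge 3$, suppose $S$ is a mutual-visibility set with $|S|\ge 3$, and order its vertices along the path. The middle one of any three of them lies in the interior of the unique geodesic between the outer two, so those outer two are not $S$-visible, a contradiction. Hence $\mu(P_n)=2=\mu_c(P_n)$.

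None of these steps is a real obstacle. The only point needing care is the leaf-set claim, which is exactly where the tree structure is used: uniqueness of paths forces the geodesic between two leaves to be the tree path, whose interior avoids all leaves.
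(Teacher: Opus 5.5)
Your proof is correct and follows the paper's route: the first assertion comes from Theorem~\ref{P16.th1}, since trees are geodetic with $\omega(T)=2$, and the equivalence is proved by counting leaves. The only difference is that the paper cites the known formula that $\mu(T)$ equals the number of leaves of $T$, whereas you prove directly the two facts you need (the leaf set is a mutual-visibility set, and $\mu(P_n)=2$), which makes your argument self-contained.
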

\begin{proof}
    Since any tree $T$ is geodetic, $\mu_c(T)=2$. Since $\mu(T)$ equals the number of leaves of $T$, $\mu_c(T)=\mu(T)$ if and only if $T$ has exactly two leaves, which holds if and only if $T$ is a path.
\end{proof}
Nordhaus--Gaddum type inequalities relate the value of a graph invariant on a graph and its complement. Motivated by the classical Nordhaus--Gaddum bounds for the chromatic number, we investigate analogous sum and product bounds for the connected mutual-visibility number.

\begin{theorem}
Let $G$ be a graph of order $n\geq 4$. Then
$\mu_c(G)+\mu_c(\overline{G})\leq 2n-3$ and $
\mu_c(G)\mu_c(\overline{G})\leq (n-1)(n-2)$.
Moreover, both bounds are sharp.
\end{theorem}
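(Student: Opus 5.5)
The plan is to reduce both inequalities to a single structural lemma: \emph{if $n\ge 4$, then $\mu_c(G)$ and $\mu_c(\overline{G})$ cannot both be at least $n-1$, unless one of $G,\overline{G}$ is complete.} Here a set inside a disconnected complement is treated as usual: two vertices in different components are never $S$-visible, so a connected mutual-visibility set lies inside one component. First I dispose of the complete case. If $G\cong K_n$, then by Proposition~\ref{P16.prop1} $\mu_c(G)=n$. Its complement $\overline{G}$ is edgeless, so $\mu_c(\overline{G})=1$. Then $n+1\le 2n-3$ and $n\le (n-1)(n-2)$ both hold because $n\ge4$. The same argument applies with the roles of $G$ and $\overline{G}$ swapped. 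Otherwise, by Proposition~\ref{P16.prop1}, both values are at most $n-1$. It then suffices to show they are not both equal to $n-1$, since then one is at most $n-2$ and the bounds $2n-3$ and $(n-1)(n-2)$ follow immediately.

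For the lemma, suppose $S=V\setminus\{v\}$ is a connected mutual-visibility set of $G$ and $T=V\setminus\{w\}$ is one of $\overline{G}$. The key observation is the following. If $x,y\in S$ are non-adjacent in $G$, they must be joined by a geodesic whose internal vertices avoid $S$, so the only possible internal vertex is $v$. Hence $d_G(x,y)=2$ and $x,y\in N_G(v)$. Symmetrically, every pair of vertices of $T$ adjacent in $G$ (i.e.\ non-adjacent in $\overline{G}$) lies in $N_{\overline{G}}(w)$, that is, consists of two non-neighbours of $w$ in $G$. Consequently, for every pair $x,y\in V\setminus\{v,w\}$:
\begin{itemize}
\item if $xy\in E(G)$, then both $x$ and $y$ are non-adjacent to $w$ in $G$;
\item if $xy\notin E(G)$, then both $x$ and $y$ lie in $N_G(v)$.
\end{itemize}

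\textbf{Case $w\ne v$.} Since $G[S]$ is connected and contains $w$, there is $x\in S\setminus\{w\}$ with $xw\in E(G)$. Since $\overline{G}[T]$ is connected and contains $v$, there is $y\in T\setminus\{v\}$ with $vy\notin E(G)$.
\begin{itemize}
\item If $x\ne y$, then the pair $x,y$ fits neither alternative, because $x\in N_G(w)$ and $y\notin N_G(v)$.
\item If $x=y$, pick any third vertex $z\in V\setminus\{v,w,x\}$, which exists since $n\ge4$. The pair $x,z$ again fits neither alternative.
\end{itemize}

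\textbf{Case $w=v$.} Every edge of $G[S]$ forces both ends outside $N_G(v)$. Since $G[S]$ is connected on at least $3$ vertices, every vertex of $S$ lies outside $N_G(v)$. Then $\overline{G}[S]$ has no edges, contradicting its connectivity.

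I expect the main obstacle to be sharpness rather than the inequality itself. For $n=4$ the cycle $C_4$ works. By the argument in Theorem~\ref{P16.th3}, $\mu_c(C_4)=3$, and $\overline{C_4}=2K_2$ gives $\mu_c=2$, so the sum is $5=2n-3$ and the product is $6=(n-1)(n-2)$. For larger $n$, I would first try graphs of the form $K_{n}$ minus a small structure, or $C_4$-like joins such as $\overline{K_2}\vee K_{n-2}$. For each candidate I would check that one side attains $n-1$ while the other attains $n-2$, using the pair conditions above as a guide to what such a graph must look like. If no such family exists, sharpness would be claimed only through the extremal example at $n=4$.
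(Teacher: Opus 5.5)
Your proof of the two inequalities is correct and follows the paper's route. You dispose of the complete case, then rule out $\mu_c(G)=\mu_c(\overline{G})=n-1$ using the same two local conditions (non-adjacent pairs of $S$ are both adjacent to $v$; $G$-adjacent pairs of $T$ are both non-adjacent to $w$) and the same split on whether $v=w$. In the case $v\neq w$ you use connectivity of $G[S]$ and $\overline{G}[T]$ to choose $x$ and $y$, whereas the paper derives the contradiction from the two conditions alone; both versions work.

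The gap is sharpness. The theorem is stated for every order $n\ge 4$, and the paper shows the bounds are attained for each such $n$. You verify this only at $n=4$ with $C_4$. For larger $n$ you defer to a search, and the candidates you suggest cannot succeed.

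Here is why. For $n\ge5$, equality in either bound forces $\{\mu_c(G),\mu_c(\overline{G})\}=\{n-1,n-2\}$. A connected mutual-visibility set lies in a single component, so $\mu_c(\overline{G})$ is at most the order of the largest component of $\overline{G}$. Viewing $G$ as $K_n$ with some structure deleted, $\overline{G}$ is exactly that deleted structure. So to reach $n-2$, the deleted edges must form a component on at least $n-2$ vertices, which is not a small structure. For instance, your $\overline{K_2}\vee K_{n-2}=K_n-e$ has $\mu_c=n-1$. But its complement $K_2\cup(n-2)K_1$ has $\mu_c=2$, so the sum is only $n+1$.

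The missing example is $G=K_{n-2}\cup 2K_1$:
\begin{itemize}
\item $\mu_c(G)=n-2$, since $G[S]$ connected forces $S$ into one component.
\item $\overline{G}=K_2\vee\overline{K_{n-2}}=K_{1,1,n-2}$ has $\mu_c(\overline{G})=n-1$. Delete one of the two universal vertices. The remaining set is connected through the other universal vertex. All its non-adjacent pairs lie in the independent part and have the deleted vertex as a common neighbour. Finally, $\overline{G}$ is not complete, so $n-1$ is the maximum.
\end{itemize}
This attains $2n-3$ and $(n-1)(n-2)$ for every $n\ge4$.
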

\begin{proof}
If $\mu_c(G)=n$, then $G\cong K_n$, and hence
$\mu_c(\overline{G})=1$; similarly, if
$\mu_c(\overline{G})=n$, then $\mu_c(G)=1$. In either case, the desired
inequalities are immediate. Hence we may assume that $ 2\le \mu_c(G),\ \mu_c(\overline{G})\le n-1$. We next show that the case
$ \mu_c(G) = \mu_c(\overline{G})=n-1$ cannot occur.
Suppose that
$S=V(G)\setminus\{x\}$ and $T=V(G)\setminus\{y\}$ are connected
mutual-visibility sets of $G$ and $\overline{G}$, respectively. Since $x$ is the only vertex outside $S$, every non-adjacent pair
$u,v\in S$ is joined by the geodesic $(u,x,v)$. Hence
\begin{equation}\label{eq:NG1}
uv\notin E(G),\quad u,v\neq x
\quad\Longrightarrow\quad
ux,vx\in E(G).
\end{equation}
Similarly, since $y$ is the only vertex outside $T$, every edge
$uv$ of $G[T]$ satisfies
\begin{equation}\label{eq:NG2}
uv\in E(G),\quad u,v\neq y
\quad\Longrightarrow\quad
uy,vy\notin E(G).
\end{equation}

\noindent\textbf{Case 1.} Suppose that $x=y$. Since $G[S]$ is connected
and $|S|=n-1\ge3$, it contains an edge $uv$. By~\eqref{eq:NG2},
$xu,xv\notin E(G)$. On the other hand,
$\overline{G}[T]$ is connected, so $u$ has a neighbour
$w$ in $\overline{G}[T]$. Thus $uw\notin E(G)$, and
\eqref{eq:NG1} gives $xu,xw\in E(G)$, contradicting
$xu\notin E(G)$.

\noindent\textbf{Case 2.} Suppose that $x\neq y$, and let
$z\in V(G)\setminus\{x,y\}$. If $xz\in E(G)$, then
\eqref{eq:NG2} gives $xy,yz\notin E(G)$. Since
$yz\notin E(G)$ and $y,z\neq x$, \eqref{eq:NG1} yields
$xy\in E(G)$, a contradiction. Hence
$xz\notin E(G)$ for every
$z\in V(G)\setminus\{x,y\}$.

It follows from~\eqref{eq:NG1} that
$yz\in E(G)$ for every
$z\in V(G)\setminus\{x,y\}$; otherwise,
$yz\notin E(G)$ would imply $xz\in E(G)$. Since $n\ge4$, choose
distinct vertices $z,w\in V(G)\setminus\{x,y\}$. If
$zw\in E(G)$, then \eqref{eq:NG2} gives
$yz\notin E(G)$, a contradiction. Hence
$zw\notin E(G)$, and \eqref{eq:NG1} yields
$xz\in E(G)$, again a contradiction.

Therefore, at least one of $\mu_c(G)$ and $\mu_c(\overline{G})$ is at
most $n-2$. Hence
$\mu_c(G)+\mu_c(\overline{G})\le (n-1)+(n-2)=2n-3$ and
$\mu_c(G)\mu_c(\overline{G})\le (n-1)(n-2)$.

To prove sharpness, let $G=K_{n-2}\cup2K_1$. Since every connected
induced subgraph of $G$ is contained in one of its components,
Theorem~\ref{P16.th7} and Proposition~\ref{P16.prop1} yield
$\mu_c(G)=n-2$. Moreover,
$\overline{G}\cong K_2\vee\overline{K_{n-2}}$, and
Theorem~\ref{P16.th6} gives
$\mu_c(\overline{G})=n-1$. Hence $ \mu_c(G)+\mu_c(\overline{G})=2n-3$ and $
\mu_c(G)\mu_c(\overline{G})=(n-1)(n-2)$, 
showing that both bounds are sharp.
\end{proof}
\begin{lemma}\label{P16.lem1}
Let $H$ be a convex subgraph of a graph $G$. Then $\mu_c(H)\leq\mu_c(G)$.
\end{lemma}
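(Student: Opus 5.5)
The plan is to show that every connected mutual-visibility set of $H$ is also a connected mutual-visibility set of $G$. Taking $S$ to be a $\mu_c$-set of $H$ then gives $\mu_c(H)=|S|\le\mu_c(G)$. Fix such an $S\subseteq V(H)$. Two things must be checked: that $G[S]$ is connected, and that any two vertices of $S$ are $S$-visible in $G$.

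Connectivity comes first. Convexity forces $H$ to be an induced subgraph of $G$. Indeed, if $u,v\in V(H)$ and $uv\in E(G)$, then $(u,v)$ is the unique shortest $(u,v)$-path in $G$, so this edge lies in $H$. Hence $H[S]=G[S]$, and $G[S]$ is connected.

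Visibility is the main step. Let $u,v\in S$. Since $S$ is a mutual-visibility set of $H$, there is a $(u,v)$-geodesic $P$ in $H$ whose internal vertices lie in $V(H)\setminus S$. I need $P$ to be a geodesic of $G$, that is, $d_H(u,v)=d_G(u,v)$. Clearly $d_G(u,v)\le d_H(u,v)$, because $H$ is a subgraph of $G$. For the reverse inequality, take any shortest $(u,v)$-path $Q$ in $G$. By convexity $Q$ lies in $H$, so $d_H(u,v)\le |E(Q)|=d_G(u,v)$. Thus $P$ has length $d_G(u,v)$ and is a $(u,v)$-geodesic in $G$. Its internal vertices lie outside $S$, regardless of whether $S$ is viewed inside $H$ or inside $G$. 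Therefore $u$ and $v$ are $S$-visible in $G$.

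I expect no real obstacle here. The only point needing care is using convexity explicitly to obtain both distance-preservation ($H$ is isometric) and inducedness. Weaker assumptions would not suffice: isometric alone, without being induced, could in principle change $G[S]$, though isometric subgraphs are in fact induced as well. The case $|S|=1$ is trivial.
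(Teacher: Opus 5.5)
Your proof is correct and follows the paper's argument: a $\mu_c$-set $S$ of $H$ remains a connected mutual-visibility set of $G$ because convexity turns $H$-geodesics into $G$-geodesics, and you justify this more carefully than the paper by checking $d_H=d_G$ on $V(H)$. The one difference is cosmetic: you show $H$ is induced to get $G[S]=H[S]$, whereas the paper only notes that $H[S]$ is a spanning subgraph of $G[S]$, which already gives connectivity; your closing remark that weaker hypotheses would not suffice is therefore off the mark, since your argument works verbatim for any isometric subgraph.
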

\begin{proof}
Let $S$ be a $\mu_c$-set of $H$. Since $H$ is convex, every geodesic in
$H$ is also a geodesic in $G$. Hence $S$ is a mutual-visibility set of
$G$. Moreover, $H[S]$ is connected and is a subgraph of $G[S]$.
Therefore, $G[S]$ is connected, and thus $S$ is a connected
mutual-visibility set of $G$. Consequently,
$\mu_c(H)=|S|\le\mu_c(G)$.
\end{proof}
\begin{theorem}\label{P16.th7}
Let $G$ be a graph of order at least $2$, and let
$\mathcal B(G)$ denote the collection of blocks of $G$. Then
\[
\mu_c(G)=\max_{B\in\mathcal B(G)}\mu_c(B).
\]
\end{theorem}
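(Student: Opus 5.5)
The proof splits into the two inequalities.

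\emph{Lower bound.} Every block $B$ of a connected graph $G$ is a convex subgraph. To see this, let $u,v\in V(B)$ and let $P$ be a $(u,v)$-geodesic. Suppose $P$ leaves $B$. Then it exits through some cut-vertex $c$ of $G$ lying in $B$. Every path from the component of $G-c$ it enters back to $B$ must pass through $c$ again, so $P$ would repeat $c$, which is impossible. Hence every geodesic between vertices of $B$ stays in $B$. Lemma~\ref{P16.lem1} then gives $\mu_c(B)\le\mu_c(G)$ for every $B\in\mathcal B(G)$, and so $\max_B\mu_c(B)\le\mu_c(G)$. If $G$ has order $2$ or is itself $2$-connected, there is only one block and nothing more is needed.

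\emph{Upper bound.} Let $S$ be a $\mu_c$-set of $G$. The plan is to show that $S\subseteq V(B)$ for a single block $B$. Once that holds, the convexity of $B$ implies that $S$ is a mutual-visibility set of $B$: each $S$-avoiding $(u,v)$-geodesic of $G$ lies inside $B$ and is also a geodesic of $B$, since $d_B=d_G$ on $B$. Also, $B[S]=G[S]$ is connected. Therefore $|S|\le\mu_c(B)$.

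To prove $S\subseteq V(B)$, suppose $S$ meets two blocks, say in vertices $a$ and $b$ that do not lie in a common block. Since $G[S]$ is connected, take an induced path in $G[S]$ from $a$ to $b$. Consecutive edges of this path lie in blocks, and since the path changes blocks, some internal vertex $c$ of the path is a cut-vertex at which the path passes from one block $B_1$ to a different block $B_2$. Let $u\in B_1$ and $w\in B_2$ be the neighbours of $c$ on the path. Both belong to $S$, and so does $c$. Any $(u,w)$-path in $G$ must pass through $c$, because $u$ and $w$ lie in different components of $G-c$: they sit in distinct blocks through $c$. Hence every $(u,w)$-geodesic has $c\in S$ as an internal vertex, and $u,w$ are not $S$-visible, a contradiction.

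The main obstacle is making this separation argument rigorous. One must justify that $u$ and $w$, being in distinct blocks sharing the cut-vertex $c$, are separated by $c$. The standard fact is that two blocks meet in at most one vertex, and that the block--cut-vertex tree structure implies this separation; I would cite it. One must also note that $u\ne w$ and that $u,w$ are non-adjacent. Non-adjacency holds because if $uw\in E(G)$, then $u,c,w$ would lie on a cycle and hence in a common block. Combining the two inequalities yields the stated equality.
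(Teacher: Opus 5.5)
Your proof is correct and follows essentially the same route as the paper. The lower bound comes from convexity of blocks together with Lemma~\ref{P16.lem1}. For the upper bound, a $\mu_c$-set not contained in one block would contain a cut-vertex of $G$ that separates two of its vertices, and that cut-vertex is then an internal vertex of every geodesic between them.

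The differences are only cosmetic. The paper proves convexity by a maximality argument: adding an external subpath to $B$ gives a larger subgraph without cut-vertices. It then picks a separated pair first and deduces from the connectivity of $G[S]$ that the separating cut-vertex lies in $S$. You instead prove convexity by a separation argument and locate the cut-vertex directly on an induced path in $G[S]$.
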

\begin{proof}
Indeed, suppose that there is a path between two vertices of a block
$B$ which leaves $B$. Then the path contains a subpath whose distinct
endvertices belong to $B$ and whose internal vertices lie outside
$B$. The union of $B$ and this subpath has no cut-vertex, and hence
is contained in a block properly containing $B$, contradicting the
maximality of $B$. Therefore, by Lemma~\ref{P16.lem1},
$\mu_c(B)\le\mu_c(G)$ for every $B\in\mathcal B(G)$. Hence,
\[
\max_{B\in\mathcal B(G)}\mu_c(B)\le\mu_c(G).
\]

Let $S$ be a $\mu_c$-set of $G$. We claim that $S$ is contained in a
single block of $G$. Suppose otherwise. Then there exist
$u,v\in S$ and a cut-vertex $x$ of $G$ such that $u$ and $v$ lie in
distinct components of $G-x$. Since $G[S]$ is connected, every
$(u,v)$-path in $G[S]$ contains $x$, and hence $x\in S$. Moreover,
every $(u,v)$-path in $G$ contains $x$, so every $(u,v)$-geodesic has
the internal vertex $x\in S$. Thus $u$ and $v$ are not mutually visible
with respect to $S$, a contradiction.

Hence $S\subseteq V(D)$ for some block $D$ of $G$. Since $D$ is convex,
$S$ is a mutual-visibility set of $D$. Moreover, $D$ is induced, so
$D[S]=G[S]$ is connected. Therefore, $S$ is a connected
mutual-visibility set of $D$. Consequently,
\[
\mu_c(G)=|S|\le\mu_c(D)\le
\max_{B\in\mathcal B(G)}\mu_c(B),
\]
and the proof is complete.
\end{proof}
\section{Graphs of diameter two}
The diameter-$2$ case deserves separate consideration because the
mutual-visibility condition becomes local. Adjacent vertices are
automatically visible, while the visibility of two non-adjacent
vertices depends only on their common neighbours. Thus, connected
mutual visibility can be studied through the structure of the induced
subgraph together with common-neighbour information. The following
characterisation will be used repeatedly in this section.
\begin{lemma}[{\cite[Lem.~1]{TonShi_dif2026}}]\label{P6.lem1}
Let $G$ be a graph of diameter $2$. A subset $S \subseteq V(G)$ is a mutual-visibility set of $G$ if and only if, for each non-adjacent pair $\{u,v\}\subseteq S$, there is a vertex $w\in V(G)\setminus S$ that is adjacent to both $u$ and $v$.
\end{lemma}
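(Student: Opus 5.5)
The statement is an equivalence, and I will prove the two implications separately. Throughout I use that in a graph of diameter $2$ any two distinct non-adjacent vertices are at distance exactly $2$. So every geodesic between them has the form $(u,w,v)$ with $w$ a common neighbour, and its only internal vertex is $w$.

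\emph{Sufficiency.} Assume that for every non-adjacent pair $\{u,v\}\subseteq S$ there is a vertex $w\in V(G)\setminus S$ adjacent to both. I check $S$-visibility pair by pair.
\begin{enumerate}[label=(\roman*)]
\item If $u,v\in S$ are adjacent, the path $(u,v)$ is a geodesic with no internal vertices, so $u$ and $v$ are $S$-visible.
\item If $u,v\in S$ are non-adjacent, then $d_G(u,v)=2$ because $G$ has diameter $2$. The path $(u,w,v)$ has length $2$, so it is a $(u,v)$-geodesic, and its only internal vertex $w$ lies outside $S$.
\end{enumerate}
Hence every pair in $S$ is $S$-visible, and $S$ is a mutual-visibility set.

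\emph{Necessity.} Let $S$ be a mutual-visibility set and let $u,v\in S$ be non-adjacent. Again $d_G(u,v)=2$. By $S$-visibility there is a $(u,v)$-geodesic whose internal vertices avoid $S$. Such a geodesic has length $2$, so it is of the form $(u,w,v)$, where $w$ is adjacent to both $u$ and $v$ and $w\notin S$. This is exactly the required vertex.

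The only point needing care is the use of the diameter hypothesis: non-adjacent distinct vertices have distance exactly $2$ (neither $1$ nor more than $2$). This is what makes "geodesic avoiding $S$" equivalent to "common neighbour outside $S$". Beyond that the argument is routine, and I expect no real obstacle.
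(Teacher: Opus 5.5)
Your proof is correct and complete. The paper does not prove this lemma itself but quotes it from the cited reference; the remark just before it rests on the same observation you use, namely that adjacent pairs are trivially visible and non-adjacent pairs lie at distance exactly $2$, so their geodesics are precisely the paths $(u,w,v)$ through common neighbours $w$.
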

\begin{theorem}\label{P16.th2}
Let $G=K_{n_1,\ldots,n_r}$ be a complete $r$-partite graph, where
$r\geq 2$, $1\leq n_1\leq \cdots \leq n_r$, and
$n=\sum_{i=1}^{r}n_i$. Then
\[
\mu_c(G)=
\begin{cases}
n, & n_r=1,\\[2mm]
2, & r=2 \text{ and } n_1=1<n_2,\\[2mm]
n-1, & n_r\geq 2,\ n_1\leq 2,\text{ and either } r\geq 3
       \text{ or } n_1=2,\\[2mm]
n-2, & n_1\geq 3.
\end{cases}
\]
\end{theorem}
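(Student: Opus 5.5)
The plan is to work entirely with Lemma~\ref{P6.lem1}, since $G$ has diameter at most $2$ (it is $K_n$ when $n_r=1$, and otherwise diameter exactly $2$). Write $V_1,\dots,V_r$ for the parts. Two vertices of $G$ are non-adjacent exactly when they lie in the same part. So a set $S$ is a mutual-visibility set if and only if, for every part $V_i$ with $|S\cap V_i|\ge2$, some vertex outside $V_i$ lies outside $S$. Equivalently: either $|S\cap V_i|\le1$, or $S$ misses some vertex of another part. Connectivity of $G[S]$ holds exactly when $S$ meets at least two parts, or $|S|=1$.

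The case $n_r=1$ is Proposition~\ref{P16.prop1}, because then $G\cong K_n$. For $r=2$ with $n_1=1<n_2$, $G$ is a star. It is therefore a tree, and Corollary~\ref{P16.cor1} gives $\mu_c(G)=2$.

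For the remaining cases I first prove the upper bounds. Since $n_r\ge2$, $G\not\cong K_n$, and Proposition~\ref{P16.prop1} gives $\mu_c(G)\le n-1$. When $n_1\ge3$, I show $|S|\le n-2$. Suppose $S=V\setminus\{x\}$ with $x\in V_i$. Any part $V_j$ with $j\ne i$ has $|S\cap V_j|=n_j\ge3\ge2$. By the criterion above, $S$ must then miss a vertex outside $V_j$, and the only missing vertex is $x$, which is indeed outside $V_j$, so these parts are fine. For $V_i$ itself, $|S\cap V_i|=n_i-1\ge2$, but $S$ misses no vertex outside $V_i$, which is a contradiction.

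For the lower bounds I give explicit constructions.
\begin{itemize}
\item Case $n_1\ge3$: take $S=V\setminus\{a,b\}$ with $a\in V_1$ and $b\in V_2$. Every part has a deleted vertex in some other part, so $S$ is a mutual-visibility set, and $S$ meets at least two parts, so it is connected.
\item Case $n_1=1$ and $r\ge3$: take $S=V\setminus\{x\}$, where $\{x\}=V_1$. Every part $V_j$ with $j\ge2$ has the missing vertex $x$ outside it, and $V_1$ contributes nothing to $S$. Since $r\ge3$, $S$ still meets at least two parts and so is connected.
\item Case $n_1=2$: take $S=V\setminus\{x\}$ with $x\in V_1$. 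Then $|S\cap V_1|=1$, and every other part has $x$ outside it, so the criterion holds. Connectivity holds because $S$ meets $V_1$ and $V_2$.
\end{itemize}
These cases exhaust the third line of the statement: $n_1\le2$ with either $r\ge3$ or $n_1=2$.

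The main obstacle is only bookkeeping. I need to confirm that the four cases cover all parameter values, with $r=2$, $n_1=1$, $n_2\ge2$ being exactly the star, and that the criterion derived from Lemma~\ref{P6.lem1} is applied correctly to the part containing a deleted vertex.
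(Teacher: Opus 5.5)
Your proof is correct and follows essentially the same route as the paper. Both handle the complete case by Proposition~\ref{P16.prop1} and take the upper bound $n-1$ from non-completeness. Both use the common-neighbour criterion of Lemma~\ref{P6.lem1} to exclude $|S|=n-1$ when $n_1\ge3$, and both use the deletion sets $V(G)\setminus\{x\}$ and $V(G)\setminus\{x,y\}$ for the lower bounds.

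The one difference is the star case. You settle it with Corollary~\ref{P16.cor1}, via trees being geodetic, whereas the paper notes directly that the centre is the unique common neighbour of any two leaves. Your route is equally valid, since that corollary is proved earlier and does not depend on this theorem.
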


\begin{proof}
Let $V_1,\ldots,V_r$ be the partite sets of $G$, where $|V_i|=n_i$ for
each $i\in\{1,\ldots,r\}$. If $n_r=1$, then every partite set has cardinality $1$, and hence
$G\cong K_n$. Therefore, Proposition~\ref{P16.prop1} gives
$\mu_c(G)=n$.

Suppose next that $r=2$ and $n_1=1<n_2$. Then $G$ is a star. Let
$V_1=\{x\}$. If a connected mutual-visibility set $S$ contains two
vertices $u,v\in V_2$, then it must also contain $x$, since $G[S]$ is
connected. However, $x$ is the unique common neighbour of $u$ and $v$.
Thus $(N_G(u)\cap N_G(v))\setminus S=\emptyset$, contradicting
Lemma~\ref{P6.lem1}. Hence $|S|\leq 2$. Since the vertex set of every
edge is a connected mutual-visibility set, it follows that
$\mu_c(G)=2$.

Now suppose that $n_r\geq 2$, $n_1\leq 2$, and either $r\geq 3$
or $n_1=2$. Choose $x\in V_1$ and let
$S=V(G)\setminus\{x\}$. Under these assumptions, $S$ intersects
at least two partite sets, so $G[S]$ is connected. Moreover,
$|S\cap V_1|\leq 1$. Thus, every pair of non-adjacent vertices
in $S$ lies in a partite set distinct from $V_1$ and has $x$
as a common neighbour outside $S$. By Lemma~\ref{P6.lem1},
$S$ is a mutual-visibility set and hence a connected
mutual-visibility set. Therefore, $\mu_c(G)\geq n-1$.
Since $G$ is not complete, Proposition~\ref{P16.prop1}
gives $\mu_c(G)< n$, and consequently $\mu_c(G)=n-1$.

Finally, suppose that $n_1\geq 3$. We first prove that
$\mu_c(G)\leq n-2$. Since $G$ is not complete,
$\mu_c(G)\leq n-1$. Suppose that there exists a connected
mutual-visibility set $S$ of cardinality $n-1$. Let
$V(G)\setminus S=\{x\}$, where $x\in V_j$. Since $n_j\geq 3$, the set
$S\cap V_j$ contains two distinct vertices, say $u$ and $v$. The
vertices $u$ and $v$ are non-adjacent, and all their common neighbours
belong to $V(G)\setminus V_j$. Since $x\in V_j$, every common neighbour
of $u$ and $v$ belongs to $S$. Hence
$(N_G(u)\cap N_G(v))\setminus S=\emptyset$, contradicting
Lemma~\ref{P6.lem1}. Therefore, $\mu_c(G)\leq n-2$. For the reverse inequality, choose vertices $x\in V_1$ and $y\in V_2$,
and set $S=V(G)\setminus\{x,y\}$. Since every partite set contains at
least three vertices, $G[S]$ is connected. Let $u,v\in S$ be
non-adjacent. Then $u$ and $v$ belong to the same partite set. If they
belong to $V_1$, then $y$ is a common neighbour of $u$ and $v$ outside
$S$. Otherwise, $x$ is such a common neighbour. By Lemma~\ref{P6.lem1},
$S$ is a mutual-visibility set. Hence $S$ is a connected
mutual-visibility set, so $\mu_c(G)\geq n-2$ and the equality follows.
\end{proof}
\begin{coro}\label{P16.coro2}
For integers $m,n\geq 2$, $
\mu_c(K_{m,n})=\max\{m+n-2,m+1,n+1\}$.
\end{coro}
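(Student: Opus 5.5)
The plan is to derive the formula directly from Theorem~\ref{P16.th2} with $r=2$ and compare the resulting value with $\max\{m+n-2,m+1,n+1\}$. By symmetry of $K_{m,n}$ we may assume $m\le n$, so we are in Theorem~\ref{P16.th2} with $n_1=m$, $n_2=n$ and total order $m+n$.

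Since $m,n\ge 2$, we have $n_r=n\ge 2$, so the case $n_r=1$ does not apply. The case $r=2$, $n_1=1<n_2$ is also excluded. The remaining two cases split according to whether $m=2$ or $m\ge 3$.

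\textbf{Case $m=2$.} The hypotheses $n_r\ge2$, $n_1\le 2$ and $n_1=2$ hold, so Theorem~\ref{P16.th2} gives $\mu_c(K_{2,n})=(2+n)-1=n+1$. On the other side, $m+n-2=n$, $m+1=3$ and $n+1\ge 3$, so $\max\{n,3,n+1\}=n+1$. The two values agree.

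\textbf{Case $m\ge 3$.} Then $n_1=m\ge 3$, and Theorem~\ref{P16.th2} gives $\mu_c(K_{m,n})=m+n-2$. Since $n\ge m\ge 3$, we have $m+n-2\ge m+1$ and $m+n-2\ge n+1$. Hence the maximum equals $m+n-2$, and again the values agree.

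There is no real obstacle here. The only care needed is the bookkeeping: checking that the case conditions of Theorem~\ref{P16.th2} are correctly matched (the condition ``$r\ge3$ or $n_1=2$'' reduces to $n_1=2$ when $r=2$), and noting that the symmetric form of the maximum makes the assumption $m\le n$ harmless.
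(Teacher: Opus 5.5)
Your proposal is correct and takes essentially the same route as the paper. You specialise Theorem~\ref{P16.th2} to $r=2$ with the smaller part as $n_1$, split into the cases where the smaller part has size $2$ or at least $3$, and check that $\max\{m+n-2,m+1,n+1\}$ agrees with the theorem in each case; the paper handles the symmetry with $a=\min\{m,n\}$ and $b=\max\{m,n\}$ rather than assuming $m\le n$, which makes no difference.
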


\begin{proof}
Let $a=\min\{m,n\}$ and $b=\max\{m,n\}$. Applying
Theorem~\ref{P16.th2} with $r=2$, $n_1=a$, and $n_2=b$, we obtain
\[
\mu_c(K_{m,n})=
\begin{cases}
m+n-1, & a=2,\\[1mm]
m+n-2, & a\geq 3.
\end{cases}
\]
Since $m+n=a+b$ and $\max\{m+1,n+1\}=b+1$, we have
\[
\max\{m+n-2,m+1,n+1\}
=\max\{a+b-2,b+1\}.
\]
If $a=2$, then this maximum is $b+1=m+n-1$. If $a\geq 3$, then
$b+1\leq a+b-2$, and hence the maximum is $a+b-2=m+n-2$. Therefore,
the claimed formula follows.
\end{proof}
\begin{theorem}\label{P16.th4}
Let $G$ be a $(d,2,-\delta)$-graph, and let $S$ be a connected
mutual-visibility set. Then
\[
\begin{aligned}
|S|(|S|-d-1)
&\leq
\sum_{x\in V(G)\setminus S}
|N_G(x)\cap S|
\bigl(|N_G(x)\cap S|-2\bigr)\\
&\leq
(d-2)\min\bigl\{
d(d^2+1-\delta-|S|),(d-2)|S|+2
\bigr\}.
\end{aligned}
\]
\end{theorem}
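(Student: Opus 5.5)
Write $s=|S|$ and, for each $x\in V(G)\setminus S$, put $a_x=|N_G(x)\cap S|$. The plan is a double count: count the non-adjacent pairs of $S$ through their common neighbours outside $S$ for the lower bound, and count the edges between $S$ and $V(G)\setminus S$ for the upper bound.

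\textbf{Lower bound.} Since $G$ has diameter $2$, Lemma~\ref{P6.lem1} applies. Every non-adjacent pair $\{u,v\}\subseteq S$ therefore has a common neighbour $x\in V(G)\setminus S$. Such a vertex $x$ can serve at most $\binom{a_x}{2}$ pairs, so
\[
\binom{s}{2}-|E(G[S])|\le \sum_{x\notin S}\binom{a_x}{2}.
\]
Next I eliminate $|E(G[S])|$ by degree counting. Because $G$ is $d$-regular,
\[
ds=2|E(G[S])|+e(S,V(G)\setminus S)=2|E(G[S])|+\sum_{x\notin S}a_x .
\]
Substituting $|E(G[S])|=\tfrac12\bigl(ds-\sum a_x\bigr)$ and multiplying by $2$ gives
\[
s(s-1)-ds+\sum a_x\le \sum a_x^2-\sum a_x .
\]
This rearranges to $s(s-d-1)\le\sum_{x\notin S}a_x(a_x-2)$, which is the first inequality.

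\textbf{Upper bound, termwise step.} Since $0\le a_x\le d$, I claim $a_x(a_x-2)\le (d-2)a_x$ for every $x\notin S$. For $a_x=0$ both sides are $0$. For $a_x=1$ the left side is $-1$ and the right side is nonnegative because $d\ge2$. For $a_x\ge2$ it follows from $a_x-2\le d-2$. Hence
\[
\sum_{x\notin S}a_x(a_x-2)\le (d-2)\sum_{x\notin S}a_x=(d-2)\,e(S,V(G)\setminus S).
\]

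\textbf{Upper bound, bounding $e(S,V(G)\setminus S)$ in two ways.} First, each of the $d^2+1-\delta-s$ vertices outside $S$ has at most $d$ neighbours in $S$, giving
\[
e(S,V(G)\setminus S)\le d(d^2+1-\delta-s).
\]
Second, $G[S]$ is connected, so $|E(G[S])|\ge s-1$, and the degree identity above gives
\[
e(S,V(G)\setminus S)=ds-2|E(G[S])|\le ds-2(s-1)=(d-2)s+2 .
\]
Taking the minimum of these two bounds and multiplying by $d-2\ge0$ yields the second inequality.

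None of the steps should be a serious obstacle. The only points that need care are the termwise inequality when $a_x=1$ (the left side is negative, so it holds precisely because $d\ge2$) and remembering that connectivity of $G[S]$ is used only in the second edge bound.
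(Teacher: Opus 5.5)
Your proposal is correct and follows the paper's proof essentially step for step. Like the paper, you double count the non-adjacent pairs of $S$ via Lemma~\ref{P6.lem1} and eliminate $|E(G[S])|$ with the $d$-regular degree identity. You then bound each term by $(d-2)a_x$ and bound $e(S,V(G)\setminus S)$ in the same two ways: by the order of the graph, and by connectivity through $|E(G[S])|\ge |S|-1$.
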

\begin{proof}
Let $q$ denote the number of unordered non-adjacent pairs of vertices
of $S$. Since $S$ is a mutual-visibility set and $G$ has diameter~$2$,
Lemma~\ref{P6.lem1} implies that every non-adjacent pair of vertices
of $S$ has a common neighbour in $S^c =
V(G)\setminus S$. Hence,
\[
q\le
\sum_{x\in S^c}
\binom{|N_G(x)\cap S|}{2}.
\]

Since $G$ is $d$-regular, $d|S|=2|E(G[S])|+e(S,S^c)$, and
$q=\binom{|S|}{2}-|E(G[S])|$. Therefore,
$2q=|S|(|S|-1)-2|E(G[S])|
=|S|(|S|-d-1)+e(S,S^c)$.
Combining these identities gives
\[
|S|(|S|-d-1)+e(S,S^c)
\le
\sum_{x\in S^c}
|N_G(x)\cap S|
\bigl(|N_G(x)\cap S|-1\bigr).
\]
Since
$e(S,S^c)=\sum_{x\in S^c}|N_G(x)\cap S|$,
subtracting $e(S,S^c)$ from both sides yields
\[
|S|(|S|-d-1)
\le
\sum_{x\in S^c}
|N_G(x)\cap S|
\bigl(|N_G(x)\cap S|-2\bigr).
\]

For the upper bound, since
$0\le |N_G(x)\cap S|\le d$ for every
$x\in S^c$, we have
$|N_G(x)\cap S|\bigl(|N_G(x)\cap S|-2\bigr)
\leq |N_G(x)\cap S|(d-2)$.
Summing over all vertices in $V(G)\setminus S$ gives
\begin{equation}\label{P16.eq1}
\sum_{x\in S^c}
|N_G(x)\cap S|
\bigl(|N_G(x)\cap S|-2\bigr)
\le
(d-2)e(S,S^c).
\end{equation}

Since $G$ has order $d^2+1-\delta$, we have
$|S^c|=d^2+1-\delta-|S|$, and hence
$e(S,S^c)\le d(d^2+1-\delta-|S|)$.
Moreover, $G[S]$ is connected, so
$|E(G[S])|\ge |S|-1$. Using
$e(S,S^c)=d|S|-2|E(G[S])|$, we obtain
$e(S,S^c)\le(d-2)|S|+2$. Therefore,
\[
e(S,S^c)
\le
\min\bigl\{
d(d^2+1-\delta-|S|),
(d-2)|S|+2
\bigr\}.
\]
Substituting this bound into~\eqref{P16.eq1} completes the proof.
\end{proof}

The unique $(d,k,-1)$-graph is the cycle $C_{2k}$, where $d=2$ and
$k\ge2$~\cite{ErdSieHof1980,BanIto1981}. For defect $2$ and diameter
$2$, four graphs are currently known. Up to isomorphism, there are
exactly two $(3,2,-2)$-graphs of order $8$~\cite{Jorgensen1992},
illustrated in Figure~\ref{P6.fig1}. One of them is triangle-free,
whereas the other contains a triangle. We next determine their
connected mutual-visibility number.
\begin{center}
\begin{minipage}{0.4\textwidth}
\centering
\begin{tikzpicture}[
    scale=0.9,line width=0.8pt,
    every node/.style={circle, draw, fill=white, inner sep=1.2pt, font=\small}, minimum size=3.5mm,
    >=stealth
]

\node (a) at (0,1.5) {$a$};
\node (b) at (1.5,1.5) {$b$};
\node (c) at (3,1.5) {$c$};
\node (d) at (4.5,1.5) {$d$};
\node (e) at (0,0) {$e$};
\node (f) at (1.5,0) {$f$};
\node (g) at (3,0) {$g$};
\node (h) at (4.5,0) {$h$};
\draw (a)--(b)--(c)--(d);
\draw (e)--(f)--(g)--(h);

\draw (a)--(e);
\draw (b)--(f);
\draw (c)--(g);
\draw (d)--(h);

\draw (a)--(h);
\draw (e)--(d);

\end{tikzpicture}
\end{minipage}
\hfill
\begin{minipage}{0.5\textwidth}
\centering
\begin{tikzpicture}[
    scale=0.9,line width=0.8pt,
    every node/.style={circle, draw, fill=white, inner sep=1.2pt, font=\small}, minimum size=3.5mm,
    >=stealth
]
\node (a) at (0,1.6) {$l$};
\node (b) at (0,0)   {$m$};
\node (c) at (1.8,0.8) {$n$};
\node (d) at (5.0,1.6) {$o$};
\node (e) at (3.3,0.8) {$p$};
\node (f) at (5.0,0.8) {$q$};
\node (g) at (5.0,0)   {$r$};
\node (h) at (7.0,0.8) {$s$};

\draw (a)--(b)--(c)--(a);

\draw (a)--(d);
\draw (b)--(g);
\draw[bend right=15] (c) to (f);

\draw (d)--(e)--(f);
\draw (e)--(g);
\draw (d)--(h);
\draw (f)--(h);
\draw (g)--(h);
\end{tikzpicture}
\end{minipage}
\captionof{figure}{The two non-isomorphic  $(3,2,-2)$-graphs}\label{P6.fig1}
\end{center}
\begin{theorem}\label{P16.th5}
Let $G$ be a $(3,2,-2)$-graph. Then $\mu_c(G)=4$.
\end{theorem}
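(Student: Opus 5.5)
The proof has two parts: an upper bound $\mu_c(G)\le 4$ valid for every $(3,2,-2)$-graph, and an explicit connected mutual-visibility set of size $4$ in each of the two graphs of Figure~\ref{P6.fig1}.

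\textbf{Upper bound.} Apply Theorem~\ref{P16.th4} with $d=3$ and $\delta=2$, so the order is $8$. Put $s=|S|$ and $a_x=|N_G(x)\cap S|$ for $x\in S^c$. Since $a_x(a_x-2)\le a_x$ when $a_x\le 3$, we get
\[
s(s-4)\le \min\{3(8-s),\,s+2\}.
\]
The second term gives $s^2-5s-2\le 0$, so $s\le 5$. Hence it suffices to exclude $s=5$. For $s=5$ the inequality reads $5\le\min\{9,7\}$, which holds, so the counting must be sharpened.

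With $|S^c|=3$, I expect the following refinement to work. The count actually used in the proof of Theorem~\ref{P16.th4} is
\[
q\le\sum_{x\in S^c}\binom{a_x}{2},
\]
where $q=10-|E(G[S])|$ and $\sum_x a_x=e(S,S^c)=15-2|E(G[S])|$. Let $m=|E(G[S])|$; connectivity gives $m\ge 4$, and $e(S,S^c)\ge0$ gives $m\le 7$. Each $a_x\le 3$, so the right-hand side is at most $3\cdot 3=9$, with contributions $\binom{a_x}{2}\in\{0,1,3\}$.
\begin{itemize}
\item $m=4$: then $e=7$ must be split over three vertices with $a_x\le3$, so the sum is at most $3+3+0$ or $3+1+1$, both at most $6<q=6$? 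This is not strict, so the case needs checking more carefully.
\item The general case: compare $10-m$ with the maximum of $\sum\binom{a_x}{2}$ subject to $\sum a_x=15-2m$ and $a_x\le 3$.
\end{itemize}
For $m=4$ the maximum is $6=q$, so equality would be forced with distribution $(3,3,1)$. For $m=5$ we have $e=5$, giving a maximum of $3+1=4<5$. For $m=6$, $e=3$, maximum $3<4$. For $m=7$, $e=1$, maximum $0<3$. So only $m=4$ survives: $G[S]$ is a tree on $5$ vertices, $S^c=\{x,y,z\}$ with $a_x=a_y=3$ and $a_z=1$, and the six non-adjacent pairs of $S$ are covered exactly once by the common neighbours $x,y$. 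Ruling out this configuration is the main obstacle. I would handle it structurally: $x$ and $y$ each have all their neighbours in $S$, so $x\not\sim y$, and $x,z$ must be at distance $\le2$. I would then either derive a contradiction from diameter $2$ together with the small number of edges leaving $S$, or check this configuration directly against the two explicit graphs.

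\textbf{Constructions.} For the triangle-containing graph, the set $\{l,m,n,o\}$ is connected (triangle plus pendant $o$). Its non-adjacent pairs are $o$–$m$, with common neighbour $r$ via $m\sim r\sim$? Here $r$ is not adjacent to $o$; $r$'s neighbours are $m,p,s$, so this choice fails. I would therefore search for a suitable $4$-set, such as a path inducing a $C_4$ with a vertex outside, or a triangle with a pendant, and verify it via Lemma~\ref{P6.lem1}. For the other graph, the $4$-cycle $a,b,f,e$ is an induced $C_4$; its non-adjacent pairs $a,f$ and $b,e$ need outside common neighbours. The neighbours of $a$ are $b,e,h$ and those of $f$ are $b,e,g$, so no outside common neighbour exists. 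Instead I would take the path $a,b,c$ plus $h$: the set $\{h,a,b,c\}$ is a path. Its non-adjacent pairs are $h$–$b$, $h$–$c$, and $a$–$c$, which I would check one by one against the adjacencies. Finding valid $4$-sets is a finite check, and I expect it to succeed.
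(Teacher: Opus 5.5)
\textbf{There are genuine gaps.} The upper bound is left incomplete, and the lower bound is never established.

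\textbf{Upper bound.} Your route is the paper's: Theorem~\ref{P16.th4} with $d=3$, $\delta=2$ gives $|S|\le5$, and the case $|S|=5$ then needs a structural step. Your inequality $q\le\sum_{x\in S^c}\binom{a_x}{2}$ is not a refinement. Since $2q=|S|(|S|-4)+e(S,S^c)$ and $\sum_x a_x=e(S,S^c)$, it is equivalent to the first inequality of Theorem~\ref{P16.th4}. Your case split on $m=|E(G[S])|$ correctly isolates $m=4$ with $(a_x,a_y,a_z)=(3,3,1)$. You then leave this case open, but one line of degree counting rules it out. The condition $a_z=1$ means $z$ has two neighbours in $S^c\setminus\{z\}=\{x,y\}$, yet $x$ and $y$ have all three of their neighbours in $S$. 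The paper phrases the same step this way: the two vertices with three neighbours in $S$ are isolated in $G[S^c]$, so $S^c$ is independent. Then every vertex of $S^c$ has three neighbours in $S$, so $e(S,S^c)=9$ and $|E(G[S])|=3<4$, contradicting connectedness. No diameter argument or inspection of the two graphs is needed, but as written your upper bound stops short of a proof.

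\textbf{Lower bound.} This is the more serious gap: you never exhibit a valid set, and ``a finite check that I expect to succeed'' is not a proof. All of your explicit candidates fail. You noticed this for $\{l,m,n,o\}$ and $\{a,b,e,f\}$. It also holds for $\{h,a,b,c\}$: $N(h)\cap N(b)=\{a\}$ and $N(a)\cap N(c)=\{b\}$ both lie inside the set, so by Lemma~\ref{P6.lem1} these pairs are not visible. The paper uses the following sets, which do work.
\begin{itemize}
\item In the triangle-free graph, $\{a,b,d,e\}$ induces the path $(b,a,e,d)$. Its non-adjacent pairs $\{a,d\}$, $\{b,d\}$, $\{b,e\}$ have the common neighbours $h$, $c$, $f$, respectively, outside the set.
\item In the graph with a triangle, $\{o,p,q,r\}$ induces a star centred at $p$. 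Its three leaves are all adjacent to $s$; indeed $N(p)=N(s)=\{o,q,r\}$.
\end{itemize}
These two graphs are the only $(3,2,-2)$-graphs up to isomorphism, so this gives $\mu_c(G)\ge4$ and completes the argument.
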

\begin{proof}
Let $S$ be a connected mutual-visibility set of $G$. Applying
Theorem~\ref{P16.th4} with $d=3$ and $\delta=2$ gives
$|S|(|S|-4)\leq\min\{3(8-|S|),|S|+2\}$.
If $|S|\ge6$, then
\[
|S|(|S|-4)\ge2|S|>|S|+2
\ge\min\{3(8-|S|),|S|+2\},
\]
a contradiction. Hence, $|S|\le5$.

Suppose that $|S|=5$. Since $G$ has order $8$, we have $|S^c|=3$. By
Theorem~\ref{P16.th4},
\[
5\le
\sum_{x\in S^c}
|N_G(x)\cap S|
\bigl(|N_G(x)\cap S|-2\bigr).
\]
Since $G$ is cubic,
$|N_G(x)\cap S|\in\{0,1,2,3\}$ for every $x\in S^c$, and
$|N_G(x)\cap S|\bigl(|N_G(x)\cap S|-2\bigr)$ is positive only when
$|N_G(x)\cap S|=3$, in which case it equals $3$. Therefore, at least
two vertices of $S^c$ have all their neighbours in $S$ and are thus
isolated in $G[S^c]$. As $|S^c|=3$, the set $S^c$ is independent. Hence every vertex of
$S^c$ has three neighbours in $S$, and therefore
$e(S,S^c)=9$. Since $G$ is cubic, $15=2|E(G[S])|+9$, and therefore
$|E(G[S])|=3$, contradicting the connectedness of $G[S]$, as every
connected graph of order $5$ has at least $4$ edges. Thus,
$\mu_c(G)\le4$.

Up to isomorphism, there are exactly two $(3,2,-2)$-graphs. We use the labelling in
Figure~\ref{P6.fig1}. In the triangle-free graph, the set
$S_1=\{a,b,d,e\}$ induces the path $(b,a,e,d)$. Its non-adjacent pairs
$\{a,d\}$, $\{b,d\}$, and $\{b,e\}$ have common neighbours $h$, $c$,
and $f$, respectively, in $V(G)\setminus S_1$. Hence,
Lemma~\ref{P6.lem1} implies that $S_1$ is a connected
mutual-visibility set.

In the graph containing a triangle, the set
$S_2=\{o,p,q,r\}$ induces a star centred at $p$. Every pair of leaves
has the common neighbour $s\notin S_2$. Hence,
Lemma~\ref{P6.lem1} implies that $S_2$ is also a connected
mutual-visibility set.

Thus $\mu_c(G)\ge4$ in both cases, and consequently
$\mu_c(G)=4$.
\end{proof}
\subsection{Connected mutual visibility in graph joins}
Recall that the join $G\vee H$ is obtained from the disjoint union of
$G$ and $H$ by adding all edges between $V(G)$ and $V(H)$. Thus every
non-adjacent pair of vertices in $G\vee H$ lies entirely in one of the
two factors. Moreover, ${\rm diam}(G\vee H)\le2$; indeed,
${\rm diam}(G\vee H)=1$ if both $G$ and $H$ are complete, and
${\rm diam}(G\vee H)=2$ otherwise. We now determine the connected
mutual-visibility number of the join.
\begin{theorem}\label{P16.th6}
Let $G$ and $H$ be graphs, not necessarily connected, each of order at least $2$. Then
\[
\mu_c(G\vee H)=
\begin{cases}
|V(G\vee H)|,
 & \text{if $G$ and $H$ are complete},\\[1mm]
|V(G\vee H)|-1,
 & \begin{array}{l}
   \text{if at least one of $G$ and $H$ is not complete and}\\
   \mu(G)\geq |V(G)|-1\text{ or }\mu(H)\geq |V(H)|-1,
   \end{array}\\[3mm]
|V(G\vee H)|-2,
 & \begin{array}{l}
   \text{if }\mu(G)<|V(G)|-1\text{ and} \ \mu(H)<|V(H)|-1.   
   \end{array}
\end{cases}
\]
\end{theorem}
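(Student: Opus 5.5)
The plan is to reduce everything to Lemma~\ref{P6.lem1}, since $G\vee H$ has diameter at most $2$. Write $n=|V(G\vee H)|$. Every non-adjacent pair of $G\vee H$ lies inside one factor. The common neighbours in $G\vee H$ of two non-adjacent vertices $u,v\in V(G)$ are the common $G$-neighbours of $u$ and $v$ together with all of $V(H)$, and symmetrically for $H$. Two general facts come first.

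\emph{Connectivity.} Any $S$ meeting both $V(G)$ and $V(H)$ induces a connected subgraph, because every vertex of $S\cap V(G)$ is adjacent to every vertex of $S\cap V(H)$.

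\emph{Visibility.} If $S$ misses at least one vertex of $H$, then every non-adjacent pair of $S\cap V(G)$ has a common neighbour outside $S$ (that missed vertex of $H$). The same holds with $G$ and $H$ interchanged.

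The first case is immediate. If both $G$ and $H$ are complete, then $G\vee H\cong K_n$, and Proposition~\ref{P16.prop1} gives $\mu_c=n$. Otherwise $G\vee H$ is not complete, so Proposition~\ref{P16.prop1} gives $\mu_c(G\vee H)\le n-1$.

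The lower bound $n-2$ holds always. Take $x\in V(G)$ and $y\in V(H)$ and put $S=V(G\vee H)\setminus\{x,y\}$. Since each factor has order at least $2$, $S$ meets both sides, so $G\vee H[S]$ is connected. Every non-adjacent pair of $S$ lies in $G$ and then sees $y$, or lies in $H$ and then sees $x$. Hence Lemma~\ref{P6.lem1} makes $S$ a connected mutual-visibility set.

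The main step is to characterise when a set of size $n-1$ works. Any such set is $S=V(G\vee H)\setminus\{x\}$, say with $x\in V(G)$ by symmetry. It is automatically connected, and pairs in $H$ see $x$. A non-adjacent pair $u,v\in V(G)\setminus\{x\}$ must have a common neighbour outside $S$, and the only candidate is $x$. So $S$ works if and only if $x$ is adjacent in $G$ to both vertices of every non-adjacent pair of $V(G)\setminus\{x\}$.

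I will then prove the auxiliary claim: this condition holds for some $x$ if and only if $\mu(G)\ge|V(G)|-1$, with $G$ allowed to be disconnected. If $G$ is complete, then $\mu(G)=|V(G)|$ and the condition holds trivially. Otherwise a mutual-visibility set of size $|V(G)|-1$ has the form $V(G)\setminus\{x\}$. For a non-adjacent pair in it, any geodesic has all its internal vertices outside the set, so the only possible internal vertex is $x$. Hence the geodesic must be $(u,x,v)$, and conversely such paths are geodesics. The same argument applies to $H$.

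The claim gives the second case (together with $\mu_c\le n-1$) and shows that in the third case no set of size $n-1$ exists, so $\mu_c=n-2$. The subtle point I expect to be the main obstacle is this claim: I must handle a complete factor (where $\mu=|V|$) and a disconnected factor (where distances may be infinite) without relying on the convention of~\cite{Stefano} beyond the path definition.
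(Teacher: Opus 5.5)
Your proposal is correct and follows essentially the same route as the paper. It gets the lower bound $n-2$ by deleting one vertex from each factor and the upper bound $n-1$ from Proposition~\ref{P16.prop1}. It then uses the key observation that for $S=V(G\vee H)\setminus\{x\}$ with $x\in V(G)$, the only possible common neighbour outside $S$ of a non-adjacent pair in $V(G)\setminus\{x\}$ is $x$; so $S$ works exactly when $V(G)\setminus\{x\}$ is a mutual-visibility set of $G$.

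Your explicit handling of a complete factor is slightly more careful than the paper's remark that ``any vertex may be chosen'' when $\mu(G)=|V(G)|$. Two small fixes are needed. State the easy fact that a non-complete graph has $\mu(G)\le |V(G)|-1$ (a non-adjacent pair in $V(G)$ has either no geodesic or only geodesics with internal vertices in $V(G)$). Also, in the auxiliary claim, write ``some geodesic'' rather than ``any geodesic''.
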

\begin{proof}
If $G$ and $H$ are complete, then $G\vee H$ is complete. Hence,
Proposition~\ref{P16.prop1} gives
$\mu_c(G\vee H)=|V(G\vee H)|$.

Suppose that at least one of $G$ and $H$ is not complete. Then
$G\vee H$ is not complete, and Proposition~\ref{P16.prop1} yields
\begin{equation}\label{P16.eq2}
\mu_c(G\vee H)\le |V(G\vee H)|-1.
\end{equation}

Choose $g\in V(G)$ and $h\in V(H)$, and let
$S=V(G\vee H)\setminus\{g,h\}$. Since both $G$ and $H$ have order at
least $2$, the set $S$ meets both $V(G)$ and $V(H)$, and hence
$(G\vee H)[S]$ is connected. Every non-adjacent pair of vertices in $S$
lies entirely in $V(G)$ or entirely in $V(H)$. If
$u,v\in V(G)\cap S$ are non-adjacent, then $h$ is a common neighbour of
$u$ and $v$ outside $S$. Similarly, every non-adjacent pair in
$V(H)\cap S$ has $g$ as a common neighbour outside $S$. Therefore,
Lemma~\ref{P6.lem1} implies that $S$ is a connected
mutual-visibility set. Consequently,
\begin{equation}\label{P16.eq3}
\mu_c(G\vee H)\ge |V(G\vee H)|-2.
\end{equation}

Suppose now that $\mu(G)\ge |V(G)|-1$ or
$\mu(H)\ge |V(H)|-1$. Without loss of generality, assume that
$\mu(G)\ge |V(G)|-1$. Then there exists $z\in V(G)$ such that
$V(G)\setminus\{z\}$ is a mutual-visibility set of $G$. Indeed, if
$\mu(G)=|V(G)|-1$, choose $z$ outside a $\mu$-set of $G$; if
$\mu(G)=|V(G)|$, any vertex of $G$ may be chosen.

Let $S=V(G\vee H)\setminus\{z\}$. Since $G$ has order at least $2$, the
set $S$ contains vertices from both factors, and hence
$(G\vee H)[S]$ is connected. Let $u,v\in S$ be non-adjacent. If
$u,v\in V(H)$, then $(u,z,v)$ is a geodesic whose internal vertex lies
outside $S$. If $u,v\in V(G)\setminus\{z\}$, then, since
$V(G)\setminus\{z\}$ is a mutual-visibility set of $G$, there is a
$(u,v)$-geodesic in $G$ whose internal vertices lie outside
$V(G)\setminus\{z\}$. As $z$ is the only such vertex, this geodesic is
$(u,z,v)$. Hence, by Lemma~\ref{P6.lem1}, $S$ is a connected
mutual-visibility set of $G\vee H$. Together with~\eqref{P16.eq2}, this
yields
$\mu_c(G\vee H)=|V(G\vee H)|-1$.

Finally, suppose that
$\mu(G)<|V(G)|-1$ and $\mu(H)<|V(H)|-1$. Assume that
$\mu_c(G\vee H)=|V(G\vee H)|-1$, and let $S$ be a connected
mutual-visibility set of this cardinality. Let
$V(G\vee H)\setminus S=\{z\}$.

Suppose that $z\in V(G)$. We show that
$V(G)\setminus\{z\}$ is a mutual-visibility set of $G$. Let
$u,v\in V(G)\setminus\{z\}$ be non-adjacent. Since $u,v\in S$, they are
mutually visible with respect to $S$. As $z$ is the only vertex outside
$S$, the corresponding geodesic is $(u,z,v)$, which is also a geodesic
in $G$. Hence $V(G)\setminus\{z\}$ is a mutual-visibility set of $G$,
contradicting $\mu(G)<|V(G)|-1$. The case $z\in V(H)$ similarly
contradicts $\mu(H)<|V(H)|-1$. Therefore,
$\mu_c(G\vee H)\le |V(G\vee H)|-2$. Together with~\eqref{P16.eq3}, this
gives
$\mu_c(G\vee H)=|V(G\vee H)|-2$.
\end{proof}
Di Stefano~\cite{Stefano} established the same
three-case formula for the classical mutual-visibility number of the
join. Thus, Theorem~\ref{P16.th6} shows that $\mu_c(G\vee H)=\mu(G\vee H)
$ whenever both $G$ and $H$ have order at least $2$.
\begin{prop}\label{P16.prop3}
Let $H$ be a graph. Then
\[
\mu_c(K_1\vee H)=
\begin{cases}
|V(H)|+1, & \text{if $H$ is complete},\\[1mm]
|V(H)|,   & \text{if $H$ is not complete}.
\end{cases}
\]
\end{prop}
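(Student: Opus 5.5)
Let $v$ denote the vertex of $K_1$ and write $n=|V(H)|$, so that $K_1\vee H$ has order $n+1$. The plan splits on whether $H$ is complete. If $H$ is complete, then $K_1\vee H\cong K_{n+1}$, and Proposition~\ref{P16.prop1} gives $\mu_c(K_1\vee H)=n+1$ at once. (If $H$ is allowed to be $K_1$, this case is just $K_2$ and is also covered.)

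Suppose now that $H$ is not complete. Then $K_1\vee H$ is not complete, so Proposition~\ref{P16.prop1} gives the upper bound $\mu_c(K_1\vee H)\le n$. For the lower bound I will exhibit the set $S=V(H)$ as a connected mutual-visibility set. The graph $K_1\vee H$ has diameter $2$, so Lemma~\ref{P6.lem1} applies. Every non-adjacent pair $u,v\in V(H)$ has $v$ as a common neighbour, and $v\notin S$, so $S$ is a mutual-visibility set.

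The main obstacle is that $H$, and hence $(K_1\vee H)[S]=H$, may be disconnected. In that case $S=V(H)$ fails, and I instead need some other set of cardinality $n$ that avoids one vertex $z\in V(H)$ and contains $v$. Such a set always induces a connected subgraph, because $v$ is adjacent to everything. Visibility then needs, for each non-adjacent pair $u,w\in V(H)\setminus\{z\}$, a common neighbour outside $S$, which must be $z$. This means $z$ must be adjacent to all vertices of $H-z$ that are not pairwise adjacent. In general this fails; for example, $H=\overline{K_3}$ gives a star $K_{1,3}$ with $\mu_c=2<3$.

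So the statement as written holds when $H$ is connected, and the proof in that case is just the two steps above. If $H$ is disconnected, the true value must be computed separately, as in the star case of Theorem~\ref{P16.th2}. I would therefore prove the proposition under the assumption that $H$ is connected, or flag the hypothesis as needed.
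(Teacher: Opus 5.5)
Your argument is correct and is essentially the paper's proof: Proposition~\ref{P16.prop1} handles the complete case and gives the upper bound $|V(H)|$ otherwise, and $S=V(H)$, with the apex as the common neighbour outside $S$, gives the matching lower bound. The connectivity of $H$ that you flag is exactly what the paper uses (``Since $H$ is connected''), relying on its standing convention that all graphs are connected unless stated otherwise; your example $K_1\vee\overline{K_3}\cong K_{1,3}$, with $\mu_c=2$, correctly shows this hypothesis cannot be dropped (minor point: don't reuse $v$ for both the apex and a vertex of $H$).
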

\begin{proof}
If $H$ is complete, then $K_1\vee H$ is complete. Hence,
Proposition~\ref{P16.prop1} gives
$\mu_c(K_1\vee H)=|V(K_1\vee H)|=|V(H)|+1$.

Suppose that $H$ is not complete, and let $z$ denote the vertex of
$K_1$. Put $S=V(H)$. Since $H$ is connected, $(K_1\vee H)[S]=H$ is
connected. If $u,v\in S$ are adjacent, then $uv$ is a geodesic. If they
are non-adjacent, then $(u,z,v)$ is a geodesic whose internal vertex
lies outside $S$. Hence $S$ is a connected mutual-visibility set, and
therefore $\mu_c(K_1\vee H)\ge |V(H)|$.

Since $H$ is not complete, $K_1\vee H$ is not complete. Thus,
Proposition~\ref{P16.prop1} yields
$\mu_c(K_1\vee H)<|V(K_1\vee H)|=|V(H)|+1$, and hence
$\mu_c(K_1\vee H)\le |V(H)|$. Therefore,
$\mu_c(K_1\vee H)=|V(H)|$.
\end{proof}
\section{Some Consequences of Main Results}
In this section, we derive several consequences of the main results by determining the connected mutual-visibility number of block graphs, cycles, cactus graphs, and corona products. These consequences illustrate how geodeticity, girth, and block structure influence connected mutual visibility.

\begin{prop}
If $G$ is a block graph, then
$\mu_c(G)=\omega(G)$.
\end{prop}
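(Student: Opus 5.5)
Two routes are available, and I would give the first and keep the second as a check. The first uses Theorem~\ref{P16.th1}: a block graph is geodetic, so $\mu_c(G)=\omega(G)$ at once. The second uses the block decomposition of Theorem~\ref{P16.th7} together with Proposition~\ref{P16.prop1}.

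\textbf{Route 1: geodeticity.} Let $u,v$ be vertices of $G$, and suppose there were two distinct $(u,v)$-geodesics. Their union contains a cycle $C$. A cycle is $2$-connected, so $C$ lies inside a single block $B$. Since $B$ is complete, every chord of $C$ is present. We may take $C$ to be formed from segments of the two geodesics between two vertices $a,b$ where they first diverge and then rejoin. Then $a$ and $b$ lie in the complete block $B$, so $ab\in E(G)$. Both segments are subpaths of geodesics, so each has length $d(a,b)=1$. This forces the two segments to coincide, which is a contradiction. Hence $G$ is geodetic, and Theorem~\ref{P16.th1} gives $\mu_c(G)=\omega(G)$.

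\textbf{Route 2: blocks.} Each block $B$ is complete, so Proposition~\ref{P16.prop1} gives $\mu_c(B)=|V(B)|$ (for a $K_2$ block this is $2$). Theorem~\ref{P16.th7} then gives
\[
\mu_c(G)=\max_{B}|V(B)|.
\]
Every clique of $G$ is $2$-connected or has order at most $2$, so it lies within some block. Every block is itself a clique. Therefore $\max_{B}|V(B)|=\omega(G)$. For this route, $G$ should have order at least $2$; the case of order $1$ is trivial.

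\textbf{Main obstacle.} Route 2 is cleaner because Theorem~\ref{P16.th7} already supplies the hard step. The only delicate points there are the order-$2$ blocks and the claim that a clique lies within one block. The latter holds because a clique on at least $3$ vertices has no cut-vertex and so sits inside a block, while a clique on $2$ vertices is an edge, which also lies in a block. I would present Route 2 as the proof.
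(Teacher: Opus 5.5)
Your proposal is correct, and your Route~1 is exactly the paper's proof. The paper asserts that every connected block graph is geodetic and then applies Theorem~\ref{P16.th1}. It gives no justification for the geodeticity claim, so your divergence--rejoin argument fills a step the paper leaves implicit. In that argument, the two internally disjoint geodesic segments between $a$ and $b$ form a cycle, the cycle lies in a single complete block, and this forces $d(a,b)=1$.

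Route~2, which you choose to present, is a genuinely different argument. It does no geodesic analysis of its own. Instead it specialises the block-locality Theorem~\ref{P16.th7}: Proposition~\ref{P16.prop1} handles each complete block, and in a block graph the maximum block order equals $\omega(G)$. It is valid as written. Every block of a connected graph of order at least $2$ has order at least $2$, so Proposition~\ref{P16.prop1} applies to each block. Every clique lies in a block, as you argue, and every block is a clique. Route~2 also gives the more explicit formula $\mu_c(G)=\max_B|V(B)|$.

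The trade-off is in what each route leans on. Route~2 rests on Theorem~\ref{P16.th7}, whose proof needs convexity of blocks and separation by cut-vertices. The paper's route needs only the elementary Theorem~\ref{P16.th1} plus the short geodeticity fact. Once that fact is granted, the paper's proof is a single line, so your remark that Route~2 is ``cleaner'' is a matter of taste.

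The two routes also generalise in different directions. The paper's argument covers all geodetic graphs. Yours exhibits the proposition as the complete-block case of block locality, so it extends to any graph whose blocks have known $\mu_c$.
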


\begin{proof}
Every connected block graph is geodetic. Therefore, the result follows
immediately from Theorem~\ref{P16.th1}.
\end{proof}
\begin{prop}\label{P16.prop4}
For every $n\geq 3$,
$$\mu_c(C_n)=
\begin{cases}
3, & n=3,4\\
2, & n\geq 5.
\end{cases}$$
\end{prop}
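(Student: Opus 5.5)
The plan is to treat three cases, $n=3$, $n=4$ and $n\ge5$, each by a direct appeal to an earlier result.

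\textbf{Case $n=3$.} Here $C_3\cong K_3$, so Proposition~\ref{P16.prop1} gives $\mu_c(C_3)=3$.

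\textbf{Case $n\ge5$.} The girth of $C_n$ is $n\ge 5$, so Theorem~\ref{P16.th3} gives $\mu_c(C_n)=2$ at once.

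\textbf{Case $n=4$.} This is the only case needing a short argument. There are two routes.
\begin{itemize}
\item Observe that $C_4\cong K_{2,2}$ and apply Corollary~\ref{P16.coro2} with $m=n=2$. This gives $\max\{2,3,3\}=3$.
\item Argue directly, mirroring the proof of Theorem~\ref{P16.th3}.
\begin{itemize}
\item Lower bound: take three consecutive vertices $v_1,v_2,v_3$ of $(v_1,v_2,v_3,v_4,v_1)$. They induce a path, and the non-adjacent pair $v_1,v_3$ sees each other through $v_4\notin S$.
\item Upper bound: since $C_4$ is not complete, Proposition~\ref{P16.prop1} gives $\mu_c(C_4)\le 3$.
\end{itemize}
\end{itemize}
I will present the direct argument and mention the corollary as a cross-check.

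No step is a genuine obstacle. The only point needing care is that Theorem~\ref{P16.th3} characterises exactly the value $2$. Hence the cases $n=3,4$ need their own lower-bound witnesses (a clique, and a three-vertex path) together with the upper bound $n-1$ from non-completeness, rather than relying on Theorem~\ref{P16.th3}.
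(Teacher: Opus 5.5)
Your proposal is correct and follows the paper's proof: $C_3\cong K_3$, Theorem~\ref{P16.th3} for $n\ge5$, and for $n=4$ the paper uses exactly your cross-check $C_4\cong K_{2,2}$ with Corollary~\ref{P16.coro2}. Your direct argument for $n=4$ is equally valid: the three-vertex path witness from the proof of Theorem~\ref{P16.th3}, plus the bound $\mu_c(C_4)\le 3$ from non-completeness via Proposition~\ref{P16.prop1}.
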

\begin{proof}
The case $n=3$ is immediate, since $C_3\cong K_3$. Since
$C_4\cong K_{2,2}$, Corollary~\ref{P16.coro2} gives
$\mu_c(C_4)=3$. For $n\geq 5$, the cycle $C_n$ has girth $n$, and hence
Theorem~\ref{P16.th3} yields $\mu_c(C_n)=2$.
\end{proof}
\begin{prop}\label{P16.cor3}
Let $G$ be a triangle-free graph of order at least $2$. Then
$\mu_c(G)\le c_4(G)+2$.
Moreover, the bound is sharp.
\end{prop}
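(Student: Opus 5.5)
The inequality is a direct specialisation of Theorem~\ref{P16.th9}. Since $G$ is triangle-free, $c_3(G)=0$, and the bound $\mu_c(G)\le c_3(G)+c_4(G)+2$ becomes $\mu_c(G)\le c_4(G)+2$. Nothing further is needed for the upper bound, because Theorem~\ref{P16.th9} only assumes $G$ has order at least $2$.

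For sharpness, I will exhibit triangle-free graphs attaining equality. Any graph of girth at least $5$ works. By Theorem~\ref{P16.th3} it satisfies $\mu_c(G)=2$, and it has $c_4(G)=0$, so equality holds. Trees and $C_n$ with $n\ge5$ are examples, by Corollary~\ref{P16.cor1} and Proposition~\ref{P16.prop4}.

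To show the bound is not only attained in this degenerate case, I will also check $C_4$. It is triangle-free, has $c_4(C_4)=1$, and $\mu_c(C_4)=3$ by Proposition~\ref{P16.prop4}, so $\mu_c(C_4)=c_4(C_4)+2$.

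As a possible richer family, I will look at $K_{2,n}$. Corollary~\ref{P16.coro2} gives $\mu_c(K_{2,n})=n+1$, but $c_4(K_{2,n})=\binom n2$. This fails to be tight for $n\ge3$, so I will present $C_4$ together with the girth-at-least-$5$ graphs as the sharpness examples.

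There is no real obstacle. The only care needed is to confirm the cycle counts: $C_4$ contains exactly one $4$-cycle, and graphs of girth at least $5$ contain none.
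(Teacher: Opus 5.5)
Your proposal is correct and matches the paper's proof: you obtain the upper bound by specialising Theorem~\ref{P16.th9} with $c_3(G)=0$, and you use $C_4$ for sharpness, where $c_4(C_4)=1$ and $\mu_c(C_4)=3$. Your extra girth-at-least-$5$ examples (equality $2=0+2$ via Theorem~\ref{P16.th3}) are also valid, though the paper goes further after the proof and constructs, for every $q\ge0$, a triangle-free graph $G_q$ with $c_4(G_q)=q$ and $\mu_c(G_q)=q+2$.
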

\begin{proof}
Since $G$ is triangle-free, $c_3(G)=0$, and Theorem~\ref{P16.th9}
yields $\mu_c(G)\le c_4(G)+2$. The bound is sharp, since
$c_4(C_4)=1$ and $\mu_c(C_4)=3$ by Proposition~\ref{P16.prop4}.
\end{proof}
The sharpness assertion can be strengthened considerably. For every
non-negative integer $q$, there exists a connected triangle-free graph
$G_q$ with $c_4(G_q)=q$ and $\mu_c(G_q)=q+2$. Indeed, let
$P=(v_0,v_1,\ldots,v_{q+1})$ be a path, and for every pair
$v_i,v_j$ with $j-i\ge2$, add a new vertex $x_{ij}$ adjacent only to
$v_i$ and $v_j$. The resulting graph is triangle-free, and its only
$4$-cycles are
$(v_i,v_{i+1},v_{i+2},x_{i(i+2)},v_i)$ for
$0\le i\le q-1$. Hence $c_4(G_q)=q$. Now let
$S=\{v_0,v_1,\ldots,v_{q+1}\}$. Then $G_q[S]=P$ is connected, and for
every pair $v_i,v_j\in S$ with $j-i\ge2$, the path
$(v_i,x_{ij},v_j)$ is a $(v_i,v_j)$-geodesic whose unique internal
vertex lies outside $S$. Thus $S$ is a connected mutual-visibility
set, so $\mu_c(G_q)\ge q+2$. The reverse inequality follows from
Proposition~\ref{P16.cor3}. Therefore,
$\mu_c(G_q)=q+2=c_4(G_q)+2$.
\begin{prop}\label{P16.prop2}
Let $G$ be a cactus graph of order at least $2$. Then
\[
\mu_c(G)=
\begin{cases}
3, & \text{if $G$ has a block isomorphic to $C_3$ or $C_4$},\\
2, & \text{otherwise}.
\end{cases}
\]
\end{prop}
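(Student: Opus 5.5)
The plan is to reduce the problem to blocks via Theorem~\ref{P16.th7}, which gives $\mu_c(G)=\max_{B\in\mathcal B(G)}\mu_c(B)$. In a cactus graph every block is either a single edge $K_2$ or a cycle $C_k$ with $k\ge 3$. Suppose some block $B'$ contained two cycles. Since $B'$ is $2$-connected, an ear decomposition would produce two cycles sharing at least two vertices, which is impossible in a cactus. So it suffices to know $\mu_c$ of each possible block type.

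For the blocks themselves we have the following values:
\begin{itemize}
\item $\mu_c(K_2)=2$, by Proposition~\ref{P16.prop1}.
\item $\mu_c(C_3)=\mu_c(C_4)=3$, by Proposition~\ref{P16.prop4}.
\item $\mu_c(C_k)=2$ for every $k\ge 5$, also by Proposition~\ref{P16.prop4}.
\end{itemize}
Taking the maximum over all blocks, $\mu_c(G)=3$ when some block is isomorphic to $C_3$ or $C_4$, and $\mu_c(G)=2$ otherwise. Here we use that $G$ is connected of order at least $2$, so it has at least one block and every block has at least $2$ vertices.

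There is an alternative check of the ``otherwise'' case. If no block is $C_3$ or $C_4$, then every cycle of $G$ lies inside a block and so has length at least $5$. Hence $g(G)\ge5$, and Theorem~\ref{P16.th3} gives $\mu_c(G)=2$ directly.

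The only step needing genuine care is the structural claim that each block of a cactus is $K_2$ or a cycle. I would argue it as follows. Let $B$ be a $2$-connected block that is not a cycle. Then $B$ contains a cycle $C$ and a vertex or edge outside $C$. By $2$-connectivity, Menger's theorem (or an ear) yields a path $P$ outside $C$ joining two distinct vertices $a,b$ of $C$. The path $P$ together with either $(a,b)$-arc of $C$ forms a cycle $C'$ that shares at least the two vertices $a,b$ with $C$. This contradicts the definition of a cactus. Everything else is a direct citation of earlier results.
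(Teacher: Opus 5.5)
Your proposal is correct and follows the same route as the paper: it reduces to blocks via Theorem~\ref{P16.th7}, uses that every block of a cactus is $K_2$ or a cycle, and reads off the block values from Proposition~\ref{P16.prop1} and Proposition~\ref{P16.prop4}. Your ear/Menger justification of the block structure, which the paper only asserts, is a welcome addition. You also correctly cite Proposition~\ref{P16.prop4} for the cycle values, whereas the paper's proof mistakenly cites the statement itself.
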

\begin{proof}
Every block of a cactus graph is isomorphic to either $K_2$ or a cycle.
Moreover, by Proposition~\ref{P16.prop1},
$\mu_c(K_2)=2$; by Proposition~\ref{P16.prop2},
$\mu_c(C_3)=\mu_c(C_4)=3$ and $\mu_c(C_n)=2$ for every $n\ge5$.
The result now follows from Theorem~\ref{P16.th7}.
\end{proof}
\begin{theorem}\label{P16.th8}
Let $G$ and $H$ be graphs, where $|V(G)|\geq2$. Then
\[
\mu_c(G\circ H)=
\begin{cases}
\max\{\mu_c(G),|V(H)|+1\}, & \text{if $H$ is complete},\\[1mm]
\max\{\mu_c(G),|V(H)|\},   & \text{if $H$ is not complete}.
\end{cases}
\]
\end{theorem}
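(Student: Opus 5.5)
The plan is to use the block decomposition, Theorem~\ref{P16.th7}. I will read $G\circ H$ as the corona product $G\odot H$ and assume $G$ is connected (the paper's standing convention). I will not assume $H$ is connected.

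\textbf{Step 1: the cut-vertices.} In $G\odot H$, each vertex $v\in V(G)$ is joined to all of $H_v$. The subgraph $K_1\vee H_v$ induced by $\{v\}\cup V(H_v)$ meets the rest of the graph only in $v$. Since $|V(G)|\ge 2$, $v$ has a neighbour in $G$, so $v$ is a cut-vertex separating $H_v$ from the copy of $G$.

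\textbf{Step 2: the blocks.} Each block of $G\odot H$ is of one of two kinds.
\begin{itemize}
\item A block of the copy of $G$, whose union is $G$ itself.
\item A block of $K_1\vee H_v$ for some $v$.
\end{itemize}
By Theorem~\ref{P16.th7}, applied in both directions, this gives
\[
\mu_c(G\odot H)=\max\Bigl\{\max_{B\in\mathcal B(G)}\mu_c(B),\ \max_{B\in\mathcal B(K_1\vee H)}\mu_c(B)\Bigr\}=\max\{\mu_c(G),\mu_c(K_1\vee H)\},
\]
where Theorem~\ref{P16.th7} is applied separately to $G$ and to $K_1\vee H$. The graph $K_1\vee H$ is connected even when $H$ is not, so the theorem applies to it.

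\textbf{Step 3: the value of $\mu_c(K_1\vee H)$.} Proposition~\ref{P16.prop3} gives $|V(H)|+1$ when $H$ is complete and $|V(H)|$ otherwise. Substituting into Step 2 yields the stated formula.

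The main obstacle is Step 3 when $H$ is disconnected, since the proof of Proposition~\ref{P16.prop3} uses connectivity of $H$ for the lower bound. In that case $V(H)$ itself does not induce a connected subgraph. Instead I would take $S=\{z\}\cup V(H)\setminus\{w\}$ for some $w\in V(H)$, where $z$ is the apex vertex.
\begin{itemize}
\item $S$ is connected through $z$.
\item Any two non-adjacent vertices of $S$ lie in $H$, so they need a common neighbour outside $S$. The only candidate is $w$, which works only if $w$ is adjacent to every vertex of $H$ other than itself.
\end{itemize}
So $S$ is a connected mutual-visibility set only if $H$ has a universal vertex, which a disconnected $H$ never has. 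This means the value $|V(H)|$ may fail for disconnected $H$. I would therefore state the argument for connected $H$ (the paper's standing convention), where Proposition~\ref{P16.prop3} applies verbatim. I would also note that the upper bound $\mu_c(K_1\vee H)\le |V(H)|$ for non-complete $H$ holds regardless, by Proposition~\ref{P16.prop1}.

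A smaller point to check is the case where $H$ is complete. Then the blocks of $K_1\vee H_v$ reduce to a single complete block $K_{|V(H)|+1}$, whose value $|V(H)|+1$ comes from Proposition~\ref{P16.prop1}, so this case is consistent with the formula.
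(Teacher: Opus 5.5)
Your proposal is correct and follows the paper's route. You identify the blocks of $G\odot H$ as those of $G$ together with those of the copies $K_1\vee H_v$, apply Theorem~\ref{P16.th7}, and finish with Proposition~\ref{P16.prop3}. Your caveat about connectivity is well placed: the paper's proof also tacitly needs $H$ connected, both for its claim that $\{v\}\cup V(H_v)$ induces a single block and for Proposition~\ref{P16.prop3}. Indeed, $G=K_2$ and $H=3K_1$ give a tree with $\mu_c=2$, whereas the formula predicts $3$.
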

\begin{proof}
For each $v\in V(G)$, let $H_v$ denote the copy of $H$ associated with
$v$ in $G\circ H$, and let
$B_v=(G\circ H)[\{v\}\cup V(H_v)]$. Then
$B_v\cong K_1\vee H$. By Proposition~\ref{P16.prop3},
$B_v$ is complete if and only if $H$ is complete.

We claim that each $B_v$ is a block of $G\circ H$. If $|V(H)|\ge2$,
then deleting $v$ leaves the connected graph $H_v$, while deleting a
vertex of $H_v$ leaves $v$ adjacent to every remaining vertex.
Therefore, $B_v$ has no cut-vertex. Moreover, $v$ is the only vertex
through which $H_v$ is adjacent to the rest of $G\circ H$, so $B_v$ is
maximal with this property. If $H\cong K_1$, then
$B_v\cong K_2$, which is also a block.

The remaining blocks of $G\circ H$ are precisely those inherited from
$G$. Hence, by Theorem~\ref{P16.th7},
$\mu_c(G\circ H)
=\max\{\mu_c(G),\mu_c(K_1\vee H)\}$.
The result now follows from Proposition~\ref{P16.prop3}.
\end{proof}
\section{Computational Complexity}
In this section, we investigate the computational complexity of
determining the connected mutual-visibility number. We begin by
formulating the corresponding decision problem.

\begin{defn}
\textsc{Connected Mutual-Visibility} problem:

\leavevmode\par
\textit{Instance:} A graph $G$ and a positive integer
$k\le |V(G)|$.

\textit{Question:} Is $\mu_c(G)\ge k$?
\end{defn}

We next investigate the computational complexity of
\textsc{Connected Mutual-Visibility}. To establish
$\mathsf{NP}$-completeness, we present a polynomial-time reduction from
the classical \textsc{Clique} problem, whose definition is recalled
below.

\begin{quote}
\noindent
\textit{Instance:} A graph $F$ and a positive integer
$k\le |V(F)|$.

\noindent
\textit
{Question:} Is $\omega(F)\ge k$?
\end{quote}
\begin{theorem}
The \textsc{Connected Mutual-Visibility} is
$\mathsf{NP}$-complete. Moreover, it remains
$\mathsf{NP}$-complete when restricted to connected bipartite graphs
of diameter at most~$4$.
\end{theorem}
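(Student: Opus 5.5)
The proof has two parts: membership in $\mathsf{NP}$, and a polynomial reduction from \textsc{Clique} that produces connected bipartite graphs of diameter at most $4$.

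\emph{Membership in $\mathsf{NP}$.} A certificate is a set $S$ with $|S|\ge k$. Connectivity of $G[S]$ is checked by one breadth-first search. For each pair $u,v\in S$, we compute $d_G(u,v)$ and also the distance between $u$ and $v$ in $G-(S\setminus\{u,v\})$. The pair is $S$-visible exactly when these two distances coincide. This uses $O(|S|^2)$ searches, so it runs in polynomial time, and this is also the recognition algorithm promised in the abstract.

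\emph{Construction.} Let $(F,k)$ be an instance of \textsc{Clique}. We may assume $k\ge 4$, since smaller $k$ are decided by brute force and can be mapped to a fixed yes- or no-instance. Build $G$ as follows.
\begin{itemize}
\item Take a vertex $a_v$ for each $v\in V(F)$ and a hub $c$ adjacent to every $a_v$.
\item For each edge $uw\in E(F)$, add a vertex $y_{uw}$ adjacent to $a_u$ and $a_w$.
\end{itemize}
Then $G$ is bipartite with parts $\{a_v\}$ and $\{c\}\cup\{y_e\}$. Every $y$-vertex and every $a$-vertex lies within distance $2$ of $c$, so the diameter is at most $4$. Note that the paths $y\!-\!a\!-\!c\!-\!a\!-\!y$ realise this bound. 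We ask whether $\mu_c(G)\ge k+1$.

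\emph{Forward direction.} Let $K$ be a $k$-clique of $F$ and set $S=\{c\}\cup\{a_v:v\in K\}$. Then $G[S]$ is a star, hence connected. Any two leaves $a_u,a_w$ are at distance $2$ and have the common neighbour $y_{uw}\notin S$. By Lemma~\ref{P6.lem1} applied to distance-$2$ pairs (only distance-$2$ pairs occur inside $S$, so the local criterion suffices), $S$ is a connected mutual-visibility set of size $k+1$.

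\emph{Converse: the main obstacle.} Let $S$ be a connected mutual-visibility set with $|S|\ge k+1\ge 5$. Two structural facts drive the argument.
\begin{itemize}
\item Two vertices $y_e,y_f\in S$ sharing an endpoint $a_w$ are at distance $2$. Their only common neighbour is $a_w$, so $a_w\notin S$ is forced.
\item Symmetrically, if $y_{uw}\in S$, then $c$ and $y_{uw}$ have common neighbours only $a_u$ and $a_w$. So if $c\in S$, these two cannot both lie in $S$.
\end{itemize}

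Suppose first that $c\notin S$. Then $G[S]$ is a connected bipartite graph on $a$- and $y$-vertices. By the first fact, every $a$-vertex of $S$ has at most one $y$-neighbour in $S$, and every $y$-vertex has at most two neighbours at all. Together these force $|S|\le 3$, which contradicts $|S|\ge 5$.

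Hence $c\in S$. Any $y_{uw}\in S$ must then have exactly one of $a_u,a_w$ in $S$, acting as a pendant attachment. Moreover, two $y$'s cannot hang from the same $a_u$. Let $A'=\{v:a_v\in S\}$. For $u,w\in A'$, the only possible common neighbours of $a_u$ and $a_w$ are $c\in S$ and $y_{uw}$. Hence $y_{uw}$ exists and lies outside $S$, so $A'$ is a clique of $F$.

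It remains to bound $|S|\le |A'|+1+(\text{number of pendant }y\text{'s})$. This requires ruling out that the pendant $y$'s make up for a small clique, and it is the delicate step. I expect to handle it in one of two ways:
\begin{itemize}
\item Show that pendant $y$'s conflict pairwise through visibility with $c$ or with other $a$'s. For example, $y_{uw}$ (hanging from $a_u$) and $a_x$ are at distance $3$, and each geodesic runs through $a_u$ or $c$, both of which lie in $S$. This would give $|S\cap Y|\le 1$, or force $y$'s only when $|A'|$ is tiny.
\item If that fails, modify the gadget by replacing $c$ with several twin hubs, or by subdividing suitably, so that pendant attachments become impossible.
\end{itemize}
The visibility of $y_{uw}$ versus $a_x$ looks like it already kills every pendant $y$ once $|A'|\ge 2$. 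In that case $|S|=|A'|+1$, so $\omega(F)\ge k$, completing the reduction.
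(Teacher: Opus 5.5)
Your construction is the paper's graph $\widehat F$. Your membership argument, the forward direction, the case $c\notin S$, and the proof that $A'$ is a clique of $F$ are all fine.

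\textbf{The gap.} The gap is the last step of the converse, and the argument you expect to close it is false. Take a pendant $y_{uw}\in S$ hanging at $a_u\in S$, with $a_w\notin S$, and a vertex $a_x\in S$ with $x\ne u$. They are joined not only by geodesics through $a_u$ or $c$, but also by $(y_{uw},a_w,y_{wx},a_x)$. This geodesic avoids $S$ whenever $wx\in E(F)$ and $y_{wx}\notin S$, so pendants survive:
\begin{itemize}
\item For $F=K_4$, the set $\{c,a_1,a_2,a_3,y_{14}\}$ is a connected mutual-visibility set with $|A'|=3$ and one pendant.
\item For $F=K_{2m}$, the set $\{c\}\cup\{a_i,y_{i,m+i}:1\le i\le m\}$ is one with $m$ pendants and only $|A'|=m$.
\end{itemize}
So neither $|S\cap Y|\le 1$ nor $|S|=|A'|+1$ holds, and $\omega(F)\ge k$ does not follow from what you have proved. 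Your fallback of modifying the gadget is not a proof. Twin hubs would in fact destroy the reduction: a second hub outside $S$ is a common neighbour of all $a_v$, so $\{c_1\}\cup\{a_v:v\in V(F)\}$ would be a connected mutual-visibility set for every $F$.

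\textbf{The missing idea.} This is how the paper finishes. Do not try to exclude pendants; let each one witness an additional clique vertex. Map each pendant $y_{uw}\in S$ (hanging at $a_u$) to its outside endpoint $w$, and let $Q$ be the image of this map.
\begin{itemize}
\item[(i)] The map is injective. If $y_{uw},y_{u'w}\in S$ with $u\ne u'$, then $y_{uw}$ and $a_{u'}$ are at distance $3$. Every geodesic between them passes through $a_u\in S$, or through $a_w$ followed by $c\in S$ or $y_{u'w}\in S$.
\item[(ii)] For $x\in A'\setminus\{u\}$, the only $(y_{uw},a_x)$-geodesic that can avoid $S$ is $(y_{uw},a_w,y_{wx},a_x)$. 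Hence $wx\in E(F)$, and $uw\in E(F)$ holds trivially.
\item[(iii)] For two pendants $y_{uw},y_{u'w'}$ we have $u\ne u'$ by your first fact and $w\ne w'$ by (i). Since $w,w'\notin A'$, the two edges are disjoint, so the vertices are at distance $4$. The only geodesic that can avoid $S$ is $(y_{uw},a_w,y_{ww'},a_{w'},y_{u'w'})$, which forces $ww'\in E(F)$.
\end{itemize}
Together with $A'$ being a clique and $Q\cap A'=\emptyset$, this makes $A'\cup Q$ a clique of $F$ of size $|A'|+|Q|$. Therefore $|S|=1+|A'|+|Q|\le\omega(F)+1$, which is exactly the bound the reduction needs.
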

\begin{proof}
Let $S\subseteq V(G)$. Whether $S$ is a mutual-visibility set of $G$
can be verified in $O(|S|(|V(G)|+|E(G)|))$ time
\cite{Stefano}. The connectedness of the induced subgraph $G[S]$ can
be checked by a breadth-first search in
$O(|S|+|E(G[S])|)$ time. Hence, whether $S$ is a connected
mutual-visibility set of $G$ can be determined in
$O\bigl(|S|(|V(G)|+|E(G)|)\bigr)$ time. Therefore, the
\textsc{Connected Mutual-Visibility} problem belongs to
$\mathsf{NP}$.

To prove $\mathsf{NP}$-hardness, we give a polynomial-time reduction
from \textsc{Clique}, which is known to be $\mathsf{NP}$-complete \cite{Karp1972}. Let $(F,k)$ be an instance of
\textsc{Clique}. Construct a graph $\widehat{F}$ from $F$ by adding a
new vertex $c$ and, for each edge $uv\in E(F)$, a new vertex
$x_{uv}$. Let $W=\{x_{uv}:uv\in E(F)\}$. Then
$V(\widehat{F})=V(F)\cup W\cup\{c\}$ and
$E(\widehat{F})=\{cv:v\in V(F)\}\cup
\{ux_{uv},vx_{uv}:uv\in E(F)\}$. A representation of $\widehat{F}$ is depicted in Figure~\ref{P16.fig2}.
\begin{center}
\begin{minipage}{0.4\textwidth}
\centering
\begin{tikzpicture}[
    scale=1,
    vertex/.style={circle, draw, fill=white, minimum size=6mm,
                   inner sep=0pt},
    newvertex/.style={circle, draw, fill=red!65, minimum size=6mm,
                      inner sep=0pt}
]

\node[newvertex] (u) at (0,1) {$u$};
\node[newvertex] (v) at (2.4,1) {$v$};
\node[newvertex] (w) at (1.2,-0.4) {$w$};
\node[vertex] (z) at (1.2,-1.8) {$z$};

\draw (u)--(v);
\draw (u)--(w);
\draw (v)--(w);
\draw (w)--(z);
\end{tikzpicture}

\smallskip
The graph $F$
\end{minipage}
\hfill
\begin {minipage}{0.5\textwidth}
\centering
\begin{tikzpicture}[
    scale=0.9,
    vertex/.style={circle, draw, fill=white, minimum size=6mm,
                   inner sep=0pt},
    newvertex/.style={circle, draw, fill=red!65, minimum size=6mm,
                      inner sep=0pt}
]
\node[newvertex] (c) at (3,2.4) {$c$};

\node[newvertex] (U) at (0,0) {$u$};
\node[newvertex] (V) at (2,0) {$v$};
\node[vertex] (W) at (4,0) {$w$};
\node[vertex] (Z) at (6,0) {$z$};

\node[vertex] (xuv) at (0.5,-2) {$x_{uv}$};
\node[newvertex] (xuw) at (2.2,-2) {$x_{uw}$};
\node[vertex] (xvw) at (3.8,-2) {$x_{vw}$};
\node[vertex] (xwz) at (5.5,-2) {$x_{wz}$};

\draw (c)--(U);
\draw (c)--(V);
\draw (c)--(W);
\draw (c)--(Z);

\draw (xuv)--(U);
\draw (xuv)--(V);

\draw (xuw)--(U);
\draw (xuw)--(W);

\draw (xvw)--(V);
\draw (xvw)--(W);

\draw (xwz)--(W);
\draw (xwz)--(Z);
\end{tikzpicture}

\smallskip
The graph $\widehat{F}$
\end{minipage}
\captionof{figure}{The construction of $\widehat F$ from $F$. The
coloured vertices indicate a maximum clique of $F$ and a $\mu_c$-set
of $\widehat F$, respectively. (For interpretation of the references to colour
in this figure legend, the reader is referred to the web version of
this article.)}\label{P16.fig2}
\end{center}

The construction is clearly polynomial-time. Moreover,
$\widehat{F}$ is bipartite with bipartition
$\bigl(V(F),W\cup\{c\}\bigr)$, and every two vertices of
$\widehat{F}$ are at distance at most~$4$. Hence
${\rm diam}(\widehat{F})\le4$.

We claim that
\begin{equation}
\mu_c(\widehat{F})=\omega(F)+1.
\label{eq:cmv-clique}
\end{equation}

Let $C$ be a maximum clique of $F$, and let
$S=C\cup\{c\}$. Then $\widehat{F}[S]$ is a star centred at $c$, and
hence is connected. It remains to verify that $S$ is a
mutual-visibility set. Since $c$ is adjacent to every vertex of $C$,
every pair involving $c$ is trivially $S$-visible. Now let
$u,v\in C$ be distinct. Since $C$ is a clique of $F$,
$uv\in E(F)$, and therefore
$x_{uv}\in V(\widehat{F})\setminus S$. Moreover,
$(u,x_{uv},v)$ is a $(u,v)$-geodesic in $\widehat{F}$ whose unique
internal vertex lies outside $S$. Hence $u$ and $v$ are
$S$-visible. Therefore, $S$ is a connected mutual-visibility set of
$\widehat{F}$, and consequently
$\mu_c(\widehat{F})\ge |S|=\omega(F)+1$.

Let $S$ be a $\mu_c$-set of \(\widehat{F}\).

\noindent
\textbf{Case 1.} Suppose that $c\notin S$. If $S$ contains at least
two vertices of $W$, then, since $\widehat{F}[S]$ is connected, a path
in $\widehat{F}[S]$ joining two vertices of $W$ contains a subpath
$(x_e,u,x_f)$, where $x_e,x_f\in W\cap S$ are distinct and
$u\in V(F)\cap S$. The vertices $x_e$ and $x_f$ are at distance $2$ in
$\widehat{F}$. Moreover, since $F$ is simple, the distinct edges $e$
and $f$ have at most one common endpoint. Hence $u$ is their unique
common neighbour, so $(x_e,u,x_f)$ is the unique
$(x_e, x_f)$-geodesic in $\widehat{F}$. Its internal vertex belongs to
$S$, contradicting the mutual visibility of $x_e$ and $x_f$.
Therefore, $|S\cap W|\le1$.

If $S\cap W=\emptyset$, then $S\subseteq V(F)$. Since $V(F)$ is an
independent set in $\widehat{F}$, the connectedness of
$\widehat{F}[S]$ implies that $|S|\le1$. Now suppose that
$S\cap W=\{x_{uv}\}$ for some $uv\in E(F)$. Then every vertex of
$S\cap V(F)$ is adjacent to $x_{uv}$. Since $N_{\widehat{F}}(x_{uv})=\{u,v\}$, it follows that
$S\subseteq\{u,x_{uv},v\}$, and hence $|S|\le3$.  Since
$uv\in E(F)$, we have $\omega(F)\ge2$. Therefore,
$|S|\le3\le\omega(F)+1$.

\noindent
\textbf{Case 2.} Suppose that $c\in S$. Put $F'=S\cap V(F)$ and $W'=S\cap W$.
Since $\{c,v\}$ is a connected mutual-visibility set of $\widehat F$
for every $v\in V(F)$, we have $|S|\ge2$. Hence, as
$\widehat F[S]$ is connected and $N_{\widehat F}(c)=V(F)$, it follows
that $F'\neq\emptyset$.

We show that $F'$ is a clique of $F$. If $|F'|=1$, then this is
immediate. Suppose that $|F'|\geq2$, and let $u,v\in F'$ be distinct. In $\widehat F$, the vertices $u$ and $v$ have the common
neighbour $c$, and, if $uv\in E(F)$, also the common neighbour
$x_{uv}$. Since $c\in S$, the mutual visibility of $u$ and $v$ implies
that $uv\in E(F)$ and $x_{uv}\notin S$. Hence every two distinct
vertices of $F'$ are adjacent in $F$, and so $F'$ is a clique.

For each $x_{uv}\in W'$, the connectedness of $\widehat F[S]$ implies
that at least one of $u$ and $v$ belongs to $F'$. On the other hand,
$c$ and $x_{uv}$ are non-adjacent, and their only geodesics are
$(c,u,x_{uv})$ and $(c,v,x_{uv})$. Since $c$ and $x_{uv}$ are
$S$-visible, at least one of $u$ and $v$ lies outside $F'$.
Consequently, exactly one endpoint of $uv$ belongs to $F'$. We may therefore define maps $p:W'\longrightarrow F'$ and
$q:W'\longrightarrow V(F)\setminus F'$ as follows. For every
$x_{uv}\in W'$, let $p(x_{uv})$ be the unique endpoint of $uv$ in
$F'$, and let $q(x_{uv})$ be the other endpoint.

Next we show that $q$ is injective. Suppose not. Then there exist distinct vertices
$x_e,x_f\in W'$ such that $q(x_e)=q(x_f)$. Let
$q(x_e)=q(x_f)=v$, $p(x_e)=u$, and $p(x_f)=w$. Since $F$ is simple and
$x_e\neq x_f$, we have $u\neq w$. Thus $x_e=x_{uv}$ and
$x_f=x_{wv}$. Consider the vertices $x_{uv}$ and $w$, both of which belong to $S$.
They are at distance $3$ in $\widehat{F}$. Every
$(x_{uv},w)$-geodesic begins through either $u$ or $v$. If it begins
through $u$, then it has the internal vertex $u\in F'\subseteq S$. If
it begins through $v$, then it must next pass through either $c$ or
$x_{wv}$, both of which belong to $S$. Thus every
$(x_{uv},w)$-geodesic contains an internal vertex of $S$,
contradicting the mutual visibility of $x_{uv}$ and $w$. Therefore,
$q$ is injective.

Let $Q=\{q(x):x\in W'\}$. We claim that $F'\cup Q$ is a clique of
$F$. Let $a\in F'$ and
$b\in Q$. By the definition of $Q$, there exists $x\in W'$ such that
$b=q(x)$. If $a=p(x)$, then $ab\in E(F)$. Suppose that
$a\neq p(x)$. Then $x$ and $a$ are at distance $3$ in $\widehat{F}$.
Since $x$ and $a$ are $S$-visible, there is an $(x,a)$-geodesic whose
internal vertices lie outside $S$. This geodesic must begin with
$xb$, since the other neighbour $p(x)$ of $x$ belongs to $S$. Its next
internal vertex cannot be $c$, because $c\in S$. Hence it must be
$x_{ab}$, which exists only if $ab\in E(F)$. Therefore every vertex of
$F'$ is adjacent in $F$ to every vertex of $Q$.

Finally, we show that $Q$ is a clique of $F$. If $|Q|\leq1$, then
this is immediate. Suppose that $|Q|\geq2$, and let $b,d\in Q$ be
distinct. Choose $x,y\in W'$ such that
$b=q(x)$ and $d=q(y)$. Suppose that $p(x)=p(y)$. Since $x$ and $y$
correspond to distinct edges of the simple graph $F$, their common
endpoint $p(x)=p(y)$ is the unique common neighbour of $x$ and $y$ in
$\widehat F$, and it belongs to $S$, contradicting the mutual
visibility of $x$ and $y$. Hence $p(x)\neq p(y)$. Since $b\neq d$, $p(x)\neq p(y)$, and $F'\cap Q=\emptyset$, the edges
of $F$ corresponding to $x$ and $y$ have no common endpoint. Hence
$d_{\widehat{F}}(x,y)=4$. Since $x$ and $y$ are $S$-visible, there is
an $(x,y)$-geodesic whose internal vertices lie outside $S$. This
geodesic must begin with $xb$, since $p(x)\in S$, and must end with
$dy$, since $p(y)\in S$. Its middle vertex cannot be $c$, because
$c\in S$. Hence it must be $x_{bd}$, which exists only if
$bd\in E(F)$. Thus every two distinct vertices of $Q$ are adjacent in
$F$. Consequently, $F'\cup Q$ is a clique of $F$.

Since $q$ is injective and $F'\cap Q=\emptyset$, we obtain $|S|-1=|F'|+|W'|=|F'|+|Q|=|F'\cup Q|\le\omega(F)
$. Therefore, $|S|\le\omega(F)+1$. Together with the preceding lower
bound, this proves~\eqref{eq:cmv-clique}.

Consequently,
\[
\omega(F)\ge k
\quad\Longleftrightarrow\quad
\mu_c(\widehat{F})\ge k+1.
\]
Thus, $(F,k)$ is a yes-instance of \textsc{Clique} if and only if
$(\widehat{F},k+1)$ is a yes-instance of
\textsc{Connected Mutual-Visibility}. Since the construction is
polynomial-time, \textsc{Connected Mutual-Visibility} is
$\mathsf{NP}$-hard. As it belongs to $\mathsf{NP}$, the problem is
$\mathsf{NP}$-complete.
\end{proof}

\section{Conclusion}

In this paper, we introduced the connected mutual-visibility number
$\mu_c(G)$ and established its fundamental properties. We
characterised its extremal values, proved that it is local with
respect to the block structure, determined its exact value for several
graph classes and graph operations, established
Nordhaus--Gaddum-type inequalities, and investigated regular graphs
of diameter $2$ with small defect. These results demonstrate that the
connectivity requirement may significantly reduce the classical
mutual-visibility number.

From an algorithmic perspective, we showed that recognising connected
mutual-visibility sets can be performed in polynomial time, whereas
the associated decision problem is $\mathsf{NP}$-complete, even when
restricted to connected bipartite graphs of diameter at most $4$. This
demonstrates that determining the connected mutual-visibility number
remains computationally intractable even under strong structural
restrictions.

Several questions remain open. It would be interesting to characterise
the graphs for which $\mu_c(G)=\mu(G)$ and to determine $\mu_c$ for
further structured graph classes, such as chordal and planar graphs.
Another natural direction is to investigate connected mutual
visibility in extremal graphs related to the Moore bound. In
particular, the present study of Moore graphs and graphs with small
defect may be extended to graphs with larger defect and, in the
complementary direction, to graphs with excess,  relative to the
corresponding Moore bound. Determining how defect and excess
influence the structure and cardinality of connected
mutual-visibility sets appears to be a promising direction for future
research.
\bibliographystyle{plainurl}
\bibliography{cas-refs}

\begin{thebibliography}{10}

\bibitem{robotics7}
A.~Aljohani, P.~Poudel, and G.~Sharma.
\newblock Complete visitability for autonomous robots on graphs.
\newblock In {\em Proceedings of the IEEE International Parallel and
  Distributed Processing Symposium (IPDPS)}, pages 733--742, 2018.
\newblock \href {https://doi.org/10.1109/IPDPS.2018.00083}
  {\path{doi:10.1109/IPDPS.2018.00083}}.

\bibitem{robotics4}
R.~J. Alsaedi, J.~Gudmundsson, and A.~van Renssen.
\newblock The mutual visibility problem for fat robots.
\newblock {\em Algorithms and Data Structures. WADS 2023}, 14079:15--28, 2023.
\newblock \href {https://doi.org/10.1007/978-3-031-38906-1_2}
  {\path{doi:10.1007/978-3-031-38906-1_2}}.

\bibitem{BanIto1973}
E~Bannai and T~Ito.
\newblock On finite {Moore} graphs.
\newblock {\em Journal of the Faculty of Science, University of Tokyo, Section
  IA, Mathematics}, 20:191--208, 1973.

\bibitem{BanIto1981}
Eiichi Bannai and Tatsuro Ito.
\newblock Regular graphs with excess one.
\newblock {\em Discrete Mathematics}, 37:147--158, 1981.
\newblock \href {https://doi.org/10.1016/0012-365X(81)90215-6}
  {\path{doi:10.1016/0012-365X(81)90215-6}}.

\bibitem{robotics3}
S.~Bhagat, P.~Dey, and R.~Ray.
\newblock Collision-free linear time mutual visibility for asynchronous fat
  robots.
\newblock In {\em Proceedings of the 25th International Conference on
  Distributed Computing and Networking}, pages 84--93, 2024.
\newblock \href {https://doi.org/10.1145/3631461.3631544}
  {\path{doi:10.1145/3631461.3631544}}.

\bibitem{robotics5}
S.~Cicerone, A.~Di~Fonso, G.~Di~Stefano, and A.~Navarra.
\newblock The geodesic mutual visibility problem for oblivious robots: The case
  of trees.
\newblock In {\em ICDCN '23: Proceedings of the 24th International Conference
  on Distributed Computing and Networking}, pages 150--159, 2023.
\newblock \href {https://doi.org/10.1145/3571306.3571401}
  {\path{doi:10.1145/3571306.3571401}}.

\bibitem{robotics1}
S.~Cicerone, A.~Di~Fonso, G.~Di~Stefano, and A.~Navarra.
\newblock Time-optimal geodesic mutual visibility of robots on grids within
  minimum area.
\newblock In {\em Stabilization Safety and Security of Distributed Systems(SSS
  2023)}, volume 14310, page 385–399, 2023.
\newblock \href {https://doi.org/10.1007/978-3-031-44274-2_29}
  {\path{doi:10.1007/978-3-031-44274-2_29}}.

\bibitem{Dam1973}
R.~M. Damerell.
\newblock On {Moore} graphs.
\newblock {\em Mathematical Proceedings of the Cambridge Philosophical
  Society}, 74:227--236, 1973.

\bibitem{DetMag2025}
Magda Dettlaff, Magdalena Lema\'{n}ska, Juan~Alberto Rodriguez-Velazquez, and
  Ismael~Gonz\'{a}lez Yero.
\newblock Mobile mutual-visibility sets in graphs.
\newblock {\em Ars Mathematica Contemporanea}, 26(1), 2026.
\newblock \href {https://doi.org/10.26493/1855-3974.3410.9bc}
  {\path{doi:10.26493/1855-3974.3410.9bc}}.

\bibitem{robotics2}
G.A. Di~Luna, P.~Flocchini, S.~G. Chaudhuri, F.~Poloni, N.~Santoro, and
  G.~Viglietta.
\newblock Mutual visibility by luminous robots without collisions.
\newblock {\em Information and Computation}, 254:392--418, 2017.

\bibitem{Stefano}
G.~Di~Stefano.
\newblock Mutual visibility in graphs.
\newblock {\em Applied Mathematics and Computation}, 419, 2022.
\newblock \href {https://doi.org/10.1016/j.amc.2021.126850}
  {\path{doi:10.1016/j.amc.2021.126850}}.

\bibitem{ErdSieHof1980}
Paul Erd{\H{o}}s, Siemion Fajtlowicz, and Alan~J. Hoffman.
\newblock Maximum degree in graphs of diameter 2.
\newblock {\em Networks}, 10(1):87--90, 1980.
\newblock \href {https://doi.org/10.1002/net.3230100109}
  {\path{doi:10.1002/net.3230100109}}.

\bibitem{hoffman_moore}
A.~J. Hoffman and R.~R. Singleton.
\newblock On {Moore} graphs with diameter 2 and 3.
\newblock {\em IBM Journal of Research and Development}, 4(5):497--504, 1960.

\bibitem{Jorgensen1992}
Leif~K. J{\o}rgensen.
\newblock Diameters of cubic graphs.
\newblock {\em Discrete Applied Mathematics}, 37/38:347--351, 1992.
\newblock \href {https://doi.org/10.1016/0166-218X(92)90144-Y}
  {\path{doi:10.1016/0166-218X(92)90144-Y}}.

\bibitem{Karp1972}
R.M. Karp.
\newblock Reducibility among combinatorial problems.
\newblock In R.~E. Miller and J.~W. Thatcher, editors, {\em Proceedings of
  Complexity of Computer Computations}, pages 85--103, 1972.

\bibitem{GP_1}
P.~Manuel and S.~Klavžar.
\newblock A general position problem in graph theory.
\newblock {\em Bulletin of the Australian Mathematical Society}, 98:177--187,
  2018.
\newblock \href {https://doi.org/10.1017/S0004972718000473}
  {\path{doi:10.1017/S0004972718000473}}.

\bibitem{MillerSiran2013}
Mirka Miller and Jozef {\v{S}}ir{\'a}{\v{n}}.
\newblock Moore graphs and beyond: A survey of the degree/diameter problem.
\newblock {\em Electronic Journal of Combinatorics}, 20(2):\#DS14v2, 2013.

\bibitem{robotics6}
G.~Sharma.
\newblock Mutual visibility for robots with lights tolerating light faults.
\newblock In {\em Proceedings of the 2018 IEEE International Parallel and
  Distributed Processing Symposium Workshops}, pages 829--836, 2018.
\newblock \href {https://doi.org/10.1109/IPDPSW.2018.00130}
  {\path{doi:10.1109/IPDPSW.2018.00130}}.

\bibitem{MV_3}
J.~Tian and S.~Klavžar.
\newblock Graphs with total mutual-visibility number zero and total
  mutual-visibility in cartesian products.
\newblock {\em Discussiones Mathematicae Graph Theory}, 44:1277--1291, 2024.
\newblock \href {https://doi.org/10.7151/dmgt.2496}
  {\path{doi:10.7151/dmgt.2496}}.

\bibitem{TonShi_dif2026}
K~B. Tonny and M.~Shikhi.
\newblock Mutual visibility in moore graphs and (d,2)-graphs with defect.
\newblock {\em Discrete Mathematics}, 350(1):115385, 2027.
\newblock \href {https://doi.org/10.1016/j.disc.2026.115385}
  {\path{doi:10.1016/j.disc.2026.115385}}.

\end{thebibliography}
\end{document}